\documentclass{article}

\usepackage[british]{babel}
\usepackage[applemac]{inputenc}
\usepackage{amsmath}
\usepackage{mathtools}
\usepackage{amssymb}
\usepackage{amsthm}
\usepackage{mathrsfs}
\usepackage{xcolor}
\usepackage{enumitem}
\usepackage[margin = 3.5cm]{geometry}
\usepackage{hyperref}
\usepackage[nameinlink]{cleveref}

\newtheorem{theorem}{Theorem}

\newtheorem{lemma}[theorem]{Lemma}
\newtheorem{corollary}[theorem]{Corollary}

\theoremstyle{definition}
\newtheorem{definition}[theorem]{Definition}

\theoremstyle{remark}

\DeclarePairedDelimiterX{\norm}[1]{\lVert}{\rVert}{#1}

\newcommand{\F}{\ensuremath{\mathcal{F}}}
\newcommand{\G}{\ensuremath{\mathcal{G}}}

\newcommand{\N}{\ensuremath{\mathbb{N}}}

\newcommand{\R}{\ensuremath{\mathbb{R}}}

\newcommand{\rk}{\text{rk}}
\newcommand{\rkplus}{\text{rk}_+}

\newcommand{\tr}{\text{tr}}

\newcommand{\msr}{\ensuremath{\text{msr}}}
\newcommand{\xc}{\ensuremath{\text{xc}}}
\newcommand{\rc}{\ensuremath{\text{rc}}}

\newcommand{\chivec}{\ensuremath{\chi_{\text{vec}}}}

\newcommand{\p}{\text{P}}

\newcommand{\bangle}[1]{\left\langle #1 \right\rangle}
\newcommand{\inprod}[2]{\bangle{#1, #2}}

\title{Orthonormal representations, vector chromatic number, and extension complexity}
\author{Igor Balla\thanks{Email:
    \href{mailto:iballa1990@gmail.com} {\nolinkurl{iballa1990@gmail.com}}. Research supported by SNSF Project 184522.}
}
\date{}

\begin{document}
\maketitle

\begin{abstract}
We construct a bipartite generalization of Alon and Szegedy's nearly orthogonal vectors, thereby obtaining strong bounds for several extremal problems involving the Lov\'asz theta function, vector chromatic number, minimum semidefinite rank, nonnegative rank, and extension complexity of polytopes. In particular, we derive a couple of general lower bounds for the vector chromatic number which may be of independent interest.
\end{abstract}

\section{Introduction}

Alon and Szegedy \cite{AS99} showed that there exists a constant $\delta > 0$ such that for any $t, d \in \N$ with $t \geq 3$, there exists a set of at least $d^{ \delta \log{t} / \log{\log{t}}  }$ vectors in $\R^d$ such that among any $t$ vectors, some pair is orthogonal. The aim of this note is to give a more general, bipartite version of this construction and to discuss its various implications. To this end, we will need the following definitions. Note that in the following, if we do not specify the base of the $\log$ function, then it is assumed to be in base 2. Also, for any graph $H$, we say that a graph is \emph{$H$-free} if it doesn't have $H$ as a subgraph.


\begin{definition} \label{def_orthonormal}
Let $R$ be a Euclidean space with inner product $\inprod{\cdot}{\cdot}$. An assignment of vectors $f : V(G) \rightarrow R$ to the vertices of a graph $G$ is called an \emph{orthonormal representation} of $G$ if for all distinct $u, v \in V(G)$, $f(v)$ is a unit vector and $\inprod{f(u)}{f(v)} = 0$ when $uv \notin E(G)$. Moreover, we call such a representation \emph{faithful} if it additionally satisfies $\inprod{f(u)}{f(v)} \neq 0$ when $uv \in E(G)$.
\end{definition}

\begin{definition} \label{def_msr}
For any graph $G$, we define its \emph{minimum semidefinite rank} $\msr(G)$ to be the minimum dimension of a Euclidean space $R$ such that there exists an orthonormal representation $f : V(G) \rightarrow R$. Moreover, we define $\msr_f(G)$ to be the same minimum over faithful orthonormal representations.

\end{definition}


The original motivation for Alon and Szegedy's construction was to give a counterexample to a conjecture of F\"uredi and Stanley \cite{FS92} regarding a problem of Erd\H{o}s which was almost\footnote{The only difference is that orthonormal representations allow multiple vertices to be labeled with the same vector, while Erd\H{o}s wanted all vectors to be distinct. This difference isn't substantial when considering orthonormal representations of $H$-free graphs since no vector can appear $|H|$ times and so one can remove duplicate vectors while only losing a multiplicative factor of $|H|$ in the number of vectors. } equivalent to that of determining the minimum of $\msr_f(G)$ over all $K_t$-free graphs $G$ on $n$ vertices, where $K_t$ is the complete graph on $t$ vertices. Indeed, their construction yields $K_t$-free graphs $G$ on $n$ vertices with $\msr_f(G) \leq n^{ O\left( \frac{\log{\log{t}}}{ \log{t}} \right) }$ and our main theorem generalizes this result to $K_{t,t}$-free graphs $G$, where $K_{t,t}$ denotes the complete bipartite graph with parts of size $t$.

\begin{theorem} \label{thm_main}
There exists a constant $C > 0$ such that for all integers $t \geq 3$ and $n \geq 2$, there exists a $K_{t,t}$-free graph $G$ with $n$ vertices and at least $n^{2 - C \frac{ \log{\log{t}} }{ \log{t}}}$ edges satisfying
\[
\msr_f(G) \leq n^{C \frac{ \log{\log{t}} }{ \log{t}} }.
\]
\end{theorem}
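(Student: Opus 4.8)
The plan is to produce a single family of unit vectors $v_1,\dots,v_n$ in a Euclidean space of dimension at most $n^{C\log\log t/\log t}$ such that the graph $G$ on $[n]$ with $ij\in E(G)$ precisely when $\inprod{v_i}{v_j}\neq 0$ is $K_{t,t}$-free and has at least $n^{2-C\log\log t/\log t}$ edges; since $i\mapsto v_i$ is then a faithful orthonormal representation of $G$, the bound on $\msr_f(G)$ follows for free. Note that $G$ cannot be taken bipartite: the larger colour class of a bipartite graph on $n$ vertices, of size at least $n/2$, is an independent set, so its vertices would receive pairwise orthogonal unit vectors, forcing the dimension to be at least $n/2$. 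Thus ``bipartite'' here refers to the defining property of the vectors — among any $t$ of them and any $t$ others, some cross pair is orthogonal — and not to the graph $G$ itself.

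Following Alon and Szegedy \cite{AS99}, I would build such vectors by the polynomial method: index them by suitable combinatorial objects — subsets of a ground set, or graphs of low-degree polynomials over a finite field — so that $\inprod{v_x}{v_y}$ is governed by a low-degree polynomial $P$ evaluated at a ``similarity'' parameter $\sigma(x,y)$ (an intersection size, or a number of agreements), with $P$ chosen so that $P(\sigma(x,x))\neq 0$ — making the normalised $v_x$ honest unit vectors and the representation faithful — while $P(\sigma(x,y))=0$ exactly when $\sigma(x,y)$ lies in a prescribed forbidden set $L$. The ambient dimension is then of order $m^{\deg P}$, where $m$ measures the size of the ground set, since a zero pattern of this kind can be realised by a Gram matrix assembled from a few positive semidefinite matrices of rank roughly $m^{\deg P}$ (for subsets, these are the matrices $\big(\binom{|x\cap y|}{j}\big)_{x,y}$, Gram matrices of $j$-subset indicators, for $j\le\deg P$). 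With $n$ the number of objects, the controlling ratio $\log(\text{dimension})/\log n\approx \deg P\cdot\log m/\log n$ must be made $O(\log\log t/\log t)$, so $L$ and hence $\deg P$ must be kept small relative to the ``size'' of the objects.

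The heart of the matter is choosing $L$, and $m$ together with the combinatorial parameters, as functions of $t$ so that two properties hold simultaneously. First, \emph{density}: a polynomially large fraction of pairs — at least $n^{-C\log\log t/\log t}$ of them — satisfy $\sigma(x,y)\notin L$, which is what supplies the edge count. This fraction must be small rather than constant, since a constant edge density would, for $n$ large in terms of $t$, already force a copy of $K_{t,t}$ by K\H{o}v\'ari--S\'os--Tur\'an; so the (well-concentrated) distribution of $\sigma(x,y)$ must sit almost entirely inside $L$, leaving only a thin but polynomially large sliver outside. Second, \emph{$K_{t,t}$-freeness}: there are no $2t$ distinct objects $x_1,\dots,x_t,y_1,\dots,y_t$ with $\sigma(x_i,y_j)\notin L$ for all $i,j$. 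For this I would run the bipartite analogue of Alon and Szegedy's clique-freeness argument: if such a complete bipartite compatibility pattern existed then, because $L$ is engineered — via a Chinese-remainder / distinct-small-primes device — so that compatible similarity values are rigidly constrained, one could extract from the $2t$ objects a linear-algebraic configuration (a matrix of values of $P$, or of associated characteristic vectors) of rank $t$ sitting inside the ambient space, bounding $t$ incompatibly with the chosen parameters. This extends the polynomial-rank bound, of Frankl--Wilson type, underlying the clique case. Optimising $\deg P$ and $m$ against $t$ — the step that produces the exponent $\log\log t/\log t$, exactly as in the original construction — then yields the claimed bound in both places; a short argument removes the bounded factor lost to repeated vectors (no vector can occur $2t$ times, as that would be a clique, hence a $K_{t,t}$), and passing to an induced subgraph or re-tuning the parameters gives exactly $n$ vertices for each $n\geq 2$.

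The main obstacle is making $K_{t,t}$-freeness and density coexist. A bipartite pattern $K_{t,t}$ is combinatorially far cheaper than a clique $K_t$ — indeed every $K_{t,t}$-free graph already has only $O_t(n^{2-1/t})$ edges, so there is almost no slack — and forcing the nearly-orthogonal structure to exclude $K_{t,t}$ while still retaining a polynomially large edge density requires a substantially more careful choice of the forbidden set than in Alon and Szegedy's clique-only setting; getting the rank obstruction to bite for a $2t$-vertex bipartite configuration rather than for a $t$-vertex clique is the crux. A secondary technical point is to verify that the Gram matrices realising the prescribed zero pattern are genuinely positive semidefinite of the stated low rank — so that one has an honest faithful orthonormal representation, not just a vertex labelling with the right inner-product pattern — which may require a more careful realisation than naive nonnegative combinations of the basic positive semidefinite matrices.
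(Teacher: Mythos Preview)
Your proposal has a genuine gap at exactly the point you flag as ``the crux'': there is no Frankl--Wilson-style rank obstruction that rules out a bipartite pattern $K_{t,t}$. In the clique case the argument works because the $t\times t$ matrix $\big(P(\sigma(x_i,x_j))\big)_{i,j}$ has zero off-diagonal and nonzero diagonal, hence full rank $t$, and this rank is bounded above by the ambient dimension. For a bipartite configuration $x_1,\dots,x_t,y_1,\dots,y_t$ with all cross-similarities outside $L$, the relevant $t\times t$ matrix $\big(P(\sigma(x_i,y_j))\big)_{i,j}$ has every entry nonzero --- which gives no lower bound on its rank at all, so no contradiction follows. More fundamentally, modular and algebraic forbidden-intersection statements (the backbone of the Chinese-remainder device you invoke) fail outright in the two-family setting: as the paper points out, splitting $\{+1,-1\}^k$ by the residue of $\sum_i v(i)$ already produces two families of size $2^{(1-o(1))k}$ with no orthogonal cross-pair. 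This kills the polynomial-method route. Incidentally, Alon and Szegedy do not use the polynomial method either; their construction is already a random sample from a tensor power, with clique-freeness supplied by the one-family Frankl--R\"odl theorem rather than by any rank bound.

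The paper's proof therefore abandons algebraic obstructions and uses a \emph{density} result --- the two-family Frankl--R\"odl theorem, which bounds $|\mathcal F||\mathcal G|\le 2^{2k(1-\varepsilon)}$ whenever $|A\cap B|\neq\ell$ for all $A\in\mathcal F$, $B\in\mathcal G$ and $\ell$ is bounded away from $0$ and $k/2$. To dodge the parity obstruction one restricts to the middle layer $M_k\subseteq\{+1,-1\}^k$ (so orthogonality becomes $|A\cap B|=k/4$), tensors $m$ copies, and samples $v_1,\dots,v_n$ uniformly at random from $M_k^{\otimes m}$. A $K_{t,t}$ in the inner-product graph would force product sets $S,T\subseteq M_k^{\otimes m}$ with no orthogonal cross-pair; applying Frankl--R\"odl in each tensor factor gives $|S||T|\le 2^{2km(1-\varepsilon)}$, and a union bound over all such ``dangerous pairs'' shows that with positive probability none of them captures $t$ samples on each side. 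The edge count is then recovered not by a direct density calculation but from the inequalities $\vartheta(\overline G)\cdot\msr(G)\ge n$ and $\vartheta(\overline G)^2\le 2|E(G)|+n$.
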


Note that the minimum of $\msr_f(G)$ over all $H$-free graphs $G$ on $n$ vertices remains the same if we replace $\msr_f(G)$ with $\msr(G)$, since one can always remove edges from the graph of an orthonormal representation in order to make the representation faithful without destroying the $H$-free property. In a previous work together with Letzter and Sudakov \cite{BLS20}, we studied this problem for various $H$, as well as a related extremal problem involving the Lov\'asz theta function. This parameter was first introduced by Lov\'asz \cite{L79} in order to determine the Shannon capacity of graphs and it has many equivalent formulations. Moreover, it is efficiently computable via semidefinite programming, so that it also has algorithmic applications and has been extensively studied, see e.g.\ Knuth \cite{K94} for more information. In order to give a definition, we let $\R^{S \times S}$ denote the space of all real matrices indexed by $S \times S$ and for any matrix $M$, we let $\lambda_1(M)$ denote its largest eigenvalue. For a graph $G$, we let $\overline{G}$ denote its complement.
 
\begin{definition} \label{def_lovasz}
The \emph{Lov\'asz theta function} $\vartheta(G)$ of a graph $G$ is defined to be the maximum over all orthonormal representations $f$ of $\overline{G}$, of the largest eigenvalue $\lambda_1(M)$ of the Gram matrix $M \in \R^{V(G) \times V(G)}$ defined by $M(u,v) = \inprod{f(u)}{f(v)}$ for $u,v \in V(G)$. 
\end{definition}

\begin{definition} \label{def_extremal}
For any graph $H$, we define $\lambda(n, H)$ and $\mu(m,H)$ to be the maximum of $\vartheta\left( \overline{G} \right)$ over all $H$-free graphs $G$ with $n$ vertices and $m$ edges, respectively.
\end{definition}


In \cite{BLS20}, we demonstrated that for some sparse graphs $H$ including cycles and certain bipartite graphs, good lower bounds for $\lambda(n, H)$ come from well-known dense and regular $H$-free graphs which are optimally pseudorandom\footnote{By optimally pseudorandom, we mean that when the graph is $d$-regular, all eigenvalue of its adjacency matrix besides $d$ are $O\left(\sqrt{d}\right)$ in absolute value.}. However, this evidently stops being the case when $H$ is a complete graph, since such constructions cannot have $\vartheta\left(\overline{G}\right)$ be larger than $O(\sqrt{n})$, while Feige \cite{F95} showed that $\lambda(n, K_t) \geq n^{1 - O\left(1/\log{t} \right)}$. An interesting case left open in our previous work was to determine what happens for a complete bipartite graph $H = K_{t,t}$ when $t \rightarrow \infty$ and in this note, we resolve this question by showing that $\lambda(n, K_{t,t}) \geq n^{1 - o(1)}$. Since our argument for proving \Cref{thm_main} generalizes the approach of Alon and Szegedy \cite{AS99}, which is in turn based on the approach of Feige \cite{F95} using the randomized graph products technique of Berman and Schnitger \cite{BS92}, it is not too surprising that \Cref{thm_main} directly implies that $\lambda(n, K_{t,t}) \geq n^{1 - o(1)}$ as $t \rightarrow \infty$. Indeed, Lov\'{a}sz \cite{L79} showed that $\vartheta(G) \leq \msr(G)$ and $n \leq \vartheta(G) \vartheta(\overline{G})$, so that we have
\begin{equation} \label{eq_theta_rank}
\vartheta\left(\overline{G}\right) \cdot \msr(G) \geq n
\end{equation}
and hence, using the fact that $\msr(G) \leq \msr_f(G)$, \Cref{thm_main} has the following immediate corollary.

\begin{corollary} \label{cor_lovasz}
There exists a constant $C > 0$ such that for all integers $t \geq 3$ and $n \geq 2$, there exists a $K_{t,t}$-free graph $G$ with $n$ vertices and at least $n^{2 - C \frac{ \log{\log{t}} }{ \log{t}}}$ edges satisfying
\[
\vartheta\left( \overline{G} \right) \geq n^{1 - C \frac{\log{\log{t}}}{\log{t}} }.
\]
\end{corollary}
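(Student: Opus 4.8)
The plan is to deduce this directly from \Cref{thm_main} together with the standard inequality \eqref{eq_theta_rank}. First I would apply \Cref{thm_main}, with the same constant $C$, to obtain for all integers $t \geq 3$ and $n \geq 2$ a $K_{t,t}$-free graph $G$ on $n$ vertices with at least $n^{2 - C \frac{\log{\log{t}}}{\log{t}}}$ edges and $\msr_f(G) \leq n^{C \frac{\log{\log{t}}}{\log{t}}}$. The edge count is already exactly what the corollary demands, so it only remains to convert the upper bound on $\msr_f(G)$ into the stated lower bound on $\vartheta(\overline{G})$.

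To do so, recall that $\msr(G) \leq \msr_f(G)$ directly from \Cref{def_msr}, since every faithful orthonormal representation is in particular an orthonormal representation. Feeding this into \eqref{eq_theta_rank}, namely $\vartheta(\overline{G}) \cdot \msr(G) \geq n$, gives
\[
\vartheta\left(\overline{G}\right) \;\geq\; \frac{n}{\msr(G)} \;\geq\; \frac{n}{\msr_f(G)} \;\geq\; \frac{n}{n^{C \frac{\log{\log{t}}}{\log{t}}}} \;=\; n^{1 - C \frac{\log{\log{t}}}{\log{t}}},
\]
which is precisely the desired bound.

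As this chain of inequalities is entirely mechanical once \Cref{thm_main} is available, there is essentially no obstacle to overcome here --- all the substantive work is contained in the proof of \Cref{thm_main} itself. The only minor bookkeeping point is the choice of constant: one can take the same $C$ as in \Cref{thm_main}, or, if slightly different constants arise when verifying the two displayed bounds, simply replace them by their maximum, which does not affect the asymptotic statement.
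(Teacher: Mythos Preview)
Your proposal is correct and follows exactly the paper's own argument: apply \Cref{thm_main}, use $\msr(G) \leq \msr_f(G)$, and invoke \eqref{eq_theta_rank} to turn the upper bound on $\msr_f(G)$ into the lower bound on $\vartheta(\overline{G})$. The paper states this as an immediate corollary with precisely this reasoning, so there is nothing to add.
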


One reason why it is interesting to study $\lambda(n,H)$ and $\mu(m, H)$ comes from our recent work together with Janzer and Sudakov \cite{BJS23}, in which we showed that any graph $G$ with $m$ edges has a cut of size
\begin{equation} \label{eq_random_hyper}
\frac{m}{2} + \Omega\left( \frac{m}{\vartheta\left( \overline{G} \right)} \right).
\end{equation}
Furthermore, we showed that if $\lambda(n, H) \leq O\left(n^{1/\alpha}\right)$ then $\mu(m, H) \leq O\left(m^{1/(\alpha + 1)} \right)$. As a consequence, we used upper bounds on $\lambda(n,H)$ obtained in \cite{BLS20} together with \eqref{eq_random_hyper} in order to give simple proofs of most of the known lower bounds regarding the problem of determining the maximum cut of $H$-free graphs with $m$ edges. Note that his problem has been studied extensively, see e.g. \cite{ABKS03, GJS23}, and perhaps the most important conjecture regarding it is that for any fixed $H$, there exists an $\varepsilon > 0$ such that any $H$-free graph with $m$ edges has a cut of size $m/2 + \Omega\left( m^{3/4 + \varepsilon} \right)$. In fact, it would already be interesting if a weaker version of this conjecture holds in which the exponent $3/4$ is replaced by anything bigger than $1/2$. However, we show that even this weaker conjecture cannot be proved for $K_{t,t}$-free graphs using \eqref{eq_random_hyper}. Indeed, since any graph with $n$ vertices has at most $\binom{n}{2}$ edges,  \Cref{cor_lovasz} implies that for any $\varepsilon > 0$, if $t$ is sufficiently large then $\mu\left(m, K_{t,t}\right) \geq \Omega\left( m ^{1/2 - \varepsilon }\right)$.

\begin{definition} \label{def_chi_vec}
The \emph{vector chromatic number} $\chivec(G)$ of a graph $G$ is defined to be the minimum $\kappa \geq 2$ for which there exists a Euclidean space $R$ with inner product $\inprod{\cdot}{\cdot}$ and an assignment of unit vectors $f : V(G) \rightarrow R$ to the vertices of $G$ such that $\inprod{f(u)}{f(v)}_R \leq \frac{-1}{\kappa - 1}$ for all edges $uv \in E(G)$.
\end{definition}

In \cite{BJS23}, we also proved a stronger version of \eqref{eq_random_hyper} where $\vartheta\left(\overline{G}\right)$ is replaced with the vector chromatic number $\chivec(G)$. Note that this parameter is known to be equivalent to Schrijver's theta function \cite{S79} applied to the complement graph. Indeed, a proof of this fact can be obtained by modifying the argument used by Karger, Motwani, and Sudan \cite{KMS98} to show that the Lov\'asz theta function applied to the complement of a graph is equivalent to a strict\footnote{By strict we mean that the condition in \Cref{def_chi_vec} should be changed to $\inprod{f(u)}{f(v)}_R = \frac{-1}{\kappa - 1}$.} version of the vector chromatic number. In view of the above, it follows that $\chivec(G) \leq \vartheta\left(\overline{G}\right)$ and so it is natural to ask whether $\chivec(G)$ for a $K_{t,t}$-free graph $G$ with $m$ edges could be significantly smaller than $\mu\left(m, K_{t,t}\right)$. By proving the following general lower bound for the vector chromatic number, we show that this is not the case. 

\begin{theorem} \label{thm_vec_lovasz}
For any graph $G$ with $n$ vertices, 
\[
\chivec(G) \geq \frac{ \vartheta\left( \overline{G} \right)^2 }{ n }.
\]
\end{theorem}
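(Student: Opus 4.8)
The plan is to exploit the variational characterizations of $\vartheta$ and $\chivec$ in terms of orthonormal representations of $\overline{G}$ and to relate them via a Rayleigh-quotient computation against the all-ones vector. Let $\kappa = \chivec(G)$ and fix an assignment of unit vectors $g : V(G) \to R$ witnessing the vector chromatic number, so that $\inprod{g(u)}{g(v)} \leq -\tfrac{1}{\kappa-1}$ for every edge $uv \in E(G)$. Observe that for a \emph{non-edge} $uv$ of $G$ (i.e.\ $uv \in E(\overline{G})$) we have no control on $\inprod{g(u)}{g(v)}$, so $g$ is not directly an orthonormal representation of anything useful; instead I would tensor. Specifically, consider the vectors $h(v) = g(v)^{\otimes 2} \in R \otimes R$, for which $\inprod{h(u)}{h(v)} = \inprod{g(u)}{g(v)}^2 \geq 0$, each $h(v)$ is a unit vector, and $\inprod{h(u)}{h(v)} \geq \tfrac{1}{(\kappa-1)^2}$ on edges of $G$. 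This still is not an orthonormal representation of $\overline{G}$, but its Gram matrix $N$ is entrywise nonnegative and has $N(u,v) \geq \tfrac{1}{(\kappa-1)^2}$ whenever $uv \in E(G)$, with $N(v,v)=1$.

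Next I would bring in $\vartheta\left(\overline{G}\right)$ through a suitable dual/primal formulation. Using \Cref{def_lovasz}, pick an orthonormal representation $f$ of $\overline{G}$ whose Gram matrix $M$ satisfies $\lambda_1(M) = \vartheta\left(\overline{G}\right)$; here $M(u,v) = 0$ for $uv \in E(G)$ and $M(v,v) = 1$. The key algebraic step is to compare $M$ and $N$ via the all-ones vector $\mathbf{1}$. On one hand, since $\lambda_1(M) = \vartheta\left(\overline{G}\right)$ is attained and one may normalise so that the top eigenvector is, after an orthogonal change of the representation, aligned with $\mathbf{1}$ — more precisely, one uses the standard fact that $\vartheta\left(\overline{G}\right) = \max \mathbf{1}^\top B \mathbf{1}$ over Gram matrices $B$ of orthonormal representations of $\overline{G}$ with $\tr(B) = 1$ suitably rescaled; I would instead invoke the formulation $\vartheta\left(\overline{G}\right) \geq \tfrac{\mathbf{1}^\top M \mathbf{1}}{n}$ is false in general, so the cleaner route is: there is a unit vector $c$ (a "handle") and an orthonormal representation with $\inprod{f(v)}{c}$ all equal, giving $\vartheta\left(\overline{G}\right) = \sum_v \inprod{f(v)}{c}^{-2}$-type identities. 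To avoid this technical morass, the approach I would actually carry out is the complementary one: use $\vartheta\left(\overline{G}\right) \leq \sqrt{n \cdot \lambda_1(N')}$ for an appropriate nonnegative matrix $N'$ built from $h$, together with the constraint-counting that $N'$ respects the non-edges.

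Concretely, here is the cleanest line I would pursue. By the Motzkin–Straus/KMS-style duality, $\chivec(G)$ equals the optimum of a semidefinite program whose dual exhibits, for any feasible $M$ in the $\vartheta$-SDP for $\overline{G}$, the bound $\chivec(G) \geq \tfrac{\mathbf{1}^\top M \mathbf{1} \cdot \min_v M(v,v)}{\text{something}}$. I would therefore: (1) write $\vartheta\left(\overline{G}\right)$ as $\max_M \mathbf{1}^\top M \mathbf{1}$ over positive semidefinite $M$ with $M(v,v) \le 1$ for all $v$ and $M(u,v) = 0$ for $uv \in E(\overline{G})$ wait — this is the formula for $\vartheta$ of a graph in terms of its \emph{own} non-edges, so applied to $\overline{G}$ it reads: $\vartheta\left(\overline{G}\right) = \max \{\mathbf{1}^\top M \mathbf{1} : M \succeq 0,\ M(v,v)\le 1,\ M(u,v) = 0 \text{ for } uv\in E(G)\}$; (2) take the optimal such $M$, which is PSD with zero diagonal-normalised entries exactly on edges of $G$; (3) use $M \succeq 0$ with zero entries on $E(G)$ to conclude that $M$ is supported on $\overline{G}$, hence $\mathbf{1}^\top M \mathbf{1} \le n \cdot \lambda_1(M) \le n\lambda_1\!\big((\text{a matrix dominated by } J_n/(\kappa-1))\big)$; (4) translate the vector-colouring inequality into $\tfrac{J - I}{\kappa - 1} \succeq -(\text{Gram of } g \text{ on } \overline{G})$-type psd domination and feed it into the quadratic form, so that $\vartheta\left(\overline{G}\right)^2 = \big(\mathbf{1}^\top M\mathbf{1}\big)^2 \le n \cdot \chivec(G) \cdot n$. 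The main obstacle — and the step I expect to need the most care — is precisely step (4): producing, from a vector $\chivec$-colouring $g$, a psd certificate that plugs into the $\vartheta\left(\overline{G}\right)$-optimal $M$ so that the Cauchy–Schwarz-type loss is exactly one factor of $n$ and exactly one factor of $\kappa - 1$ per "side", yielding the clean exponent $2$. I would handle this by testing the $\chivec$ certificate against the $\vartheta$ optimiser (a pairing of the primal of one SDP with the dual of the other), where complementary slackness on the common support $E(G)$ versus $E(\overline{G})$ forces the product bound $\vartheta\left(\overline{G}\right)^2 \le n\,\chivec(G)$.
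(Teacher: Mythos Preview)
Your proposal is not a proof; it is a sequence of exploratory attempts, each of which you yourself abandon or flag as incomplete. The decisive step~(4) is described only as ``the main obstacle'' to be handled by ``a pairing of the primal of one SDP with the dual of the other'' and ``complementary slackness'', with no computation carried out. Moreover, the displayed target inequality $\big(\mathbf{1}^\top M\mathbf{1}\big)^2 \le n \cdot \chivec(G)\cdot n$ would, if established, only give $\vartheta(\overline{G})^2 \le n^2\,\chivec(G)$, which is a factor of $n$ too weak.

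There is also a structural misstep at the outset. You tensor the \emph{vector-colouring} vectors $g(v)$, obtaining $h(v)=g(v)^{\otimes 2}$ whose Gram matrix is nonnegative and bounded below on edges of $G$, but has no prescribed behaviour on non-edges; as you note, this is not an orthonormal representation of anything useful. The paper does the reverse: it starts from an orthonormal representation $f$ of $G$ achieving $\lambda_1(M)=\vartheta(\overline{G})$ and tensor-squares \emph{that}, setting $g(v)=f(v)f(v)^\intercal$. Because $\inprod{g(u)}{g(v)}_F=\inprod{f(u)}{f(v)}^2$, this $g$ is a \emph{nonnegative} orthonormal representation of $G$ (zero on non-edges, nonnegative everywhere). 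The paper then invokes a characterization you never reach: $\chivec(G)$ equals the maximum of $\sum_{v}\inprod{f(v)}{x}^2$ over nonnegative orthonormal representations $f$ of $G$ and unit vectors $x$ (this is \Cref{lem_lower}, derived from the Schrijver-theta formulation). Testing $g$ against the rank-one unit matrix $B=\tfrac{1}{\lambda_1}(Lx)(Lx)^\intercal$, where $x$ is the top eigenvector of $M$, gives $\inprod{g(v)}{B}_F=\lambda_1\,x(v)^2$, whence
\[
\chivec(G)\;\ge\;\sum_{v}\inprod{g(v)}{B}_F^2\;=\;\lambda_1^2\sum_{v}x(v)^4\;\ge\;\frac{\lambda_1^2}{n}
\]
by Cauchy--Schwarz. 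The two ingredients you are missing are thus (i) tensoring the $\vartheta$-optimiser rather than the colouring, and (ii) the ``nonnegative orthonormal representation'' reformulation of $\chivec$, which is exactly the bridge that lets a single Cauchy--Schwarz produce one clean factor of $n$.
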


Indeed, applying \Cref{thm_vec_lovasz} to the graphs constructed in \Cref{cor_lovasz}, we conclude the following.

\begin{corollary} \label{cor_vec_gap}
There exists a constant $C > 0$ such that for all integers $t \geq 3$ and $n \geq 2$, there exists a $K_{t,t}$-free graph $G$ with $n$ vertices and $m$ edges such that $m \geq n^{2 - C \frac{ \log{\log{t}} }{ \log{t}}}$ and
\[
\chivec(G) \geq n^{1 - C \frac{\log{\log{t}}}{ \log{t}} } \geq m^{1/2 - C \frac{\log{\log{t}}}{ \log{t}} }.
\]
\end{corollary}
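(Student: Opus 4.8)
The plan is to plug the graphs furnished by \Cref{cor_lovasz} straight into \Cref{thm_vec_lovasz}; after that the corollary reduces to an exponent computation, so there is no genuine obstacle beyond keeping a single constant consistent across the three asserted inequalities.

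Concretely, I would let $C_0 > 0$ be the constant provided by \Cref{cor_lovasz}, abbreviate $\eta = \eta(t) := \frac{\log\log t}{\log t} \ge 0$, and fix integers $t \ge 3$, $n \ge 2$. \Cref{cor_lovasz} then yields a $K_{t,t}$-free graph $G$ on $n$ vertices with $m := |E(G)| \ge n^{2 - C_0\eta}$ and $\vartheta\!\left(\overline{G}\right) \ge n^{1 - C_0\eta}$. Feeding this $G$ into \Cref{thm_vec_lovasz} gives
\[
\chivec(G) \;\ge\; \frac{\vartheta\!\left(\overline{G}\right)^{2}}{n} \;\ge\; \frac{n^{2 - 2C_0\eta}}{n} \;=\; n^{1 - 2C_0\eta}.
\]
Setting $C := 2C_0$ and using $\eta \ge 0$, $n \ge 2$, this immediately gives $m \ge n^{2 - C_0\eta} \ge n^{2 - C\eta}$ and $\chivec(G) \ge n^{1 - 2C_0\eta} = n^{1 - C\eta}$, i.e.\ the first two claimed bounds with the single constant $C$.

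For the last inequality I would use only the trivial estimate $m \le \binom{n}{2} < n^{2}$. Provided $\tfrac12 - C\eta \ge 0$ — which is the range of interest, since $\eta(t) \to 0$ as $t \to \infty$ — this gives $m^{1/2 - C\eta} \le \bigl(n^{2}\bigr)^{1/2 - C\eta} = n^{1 - 2C\eta} \le n^{1 - C\eta}$, the last step again because $C\eta \ge 0$; hence $\chivec(G) \ge n^{1 - C\eta} \ge m^{1/2 - C\eta}$ and the full chain holds. (In the complementary range $\tfrac12 - C\eta < 0$ one instead has $m^{1/2 - C\eta} \le 1 < 2 \le \chivec(G)$ since $m \ge 1$, so the bound $\chivec(G) \ge m^{1/2 - C\eta}$ still holds, and this range is empty once $t$ is large.) The only point demanding a little care is precisely this bookkeeping — that one constant $C$ (namely $2C_0$) simultaneously governs the edge count, the lower bound on $\chivec(G)$, and, through $m < n^{2}$, the passage from exponents in $n$ to exponents in $m$; the mathematical substance lies entirely in the construction behind \Cref{cor_lovasz} and in the SDP argument for \Cref{thm_vec_lovasz}.
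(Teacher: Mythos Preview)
Your proposal is correct and follows exactly the route the paper indicates: the paper's entire argument is the single sentence ``applying \Cref{thm_vec_lovasz} to the graphs constructed in \Cref{cor_lovasz}, we conclude the following,'' and you have simply written out the exponent bookkeeping (including the passage from $n$-exponents to $m$-exponents via $m<n^2$) that this sentence leaves implicit. The paper also remarks that the same corollary can alternatively be obtained from \Cref{thm_main} together with \Cref{thm_ng}, but the primary derivation is the one you gave.
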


We also note that \Cref{cor_vec_gap} can be obtained directly from \Cref{thm_main} via the following inequality, which generalizes \eqref{eq_theta_rank} and may be of independent interest. 

\begin{theorem} \label{thm_ng}
For any graph $G$ on $n$ vertices, 
\[
\chivec(G) \cdot \msr(G) \geq n.
\]
\end{theorem}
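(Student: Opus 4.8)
The plan is to follow the short proof of \eqref{eq_theta_rank} as closely as possible. There one takes a minimum-dimensional orthonormal representation of $G$, notes that its Gram matrix is positive semidefinite with trace $n$ and rank at most $\msr(G)$, and concludes that its largest eigenvalue is at least $n/\msr(G)$. Here I want to squeeze out more, namely a statement about $\chivec(G)$ rather than about $\vartheta(\overline G)$, so I will pair the Gram matrix of the orthonormal representation against a Gram matrix witnessing $\chivec(G)$. Concretely, set $r = \msr(G)$ and fix an orthonormal representation $f \colon V(G) \to \R^r$, with Gram matrix $A \in \R^{V(G)\times V(G)}$; thus $A \succeq 0$, all diagonal entries of $A$ equal $1$ (so $\tr(A) = n$), $\rk(A) \le r$, and $A(u,v) = 0$ whenever $u \ne v$ are non-adjacent. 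Set $\kappa = \chivec(G)$ and fix unit vectors $g \colon V(G) \to R$ with $\inprod{g(u)}{g(v)} \le -\tfrac{1}{\kappa-1}$ for every edge $uv$, with Gram matrix $B$; thus $B \succeq 0$, $B$ has unit diagonal, and $B(u,v) \le -\tfrac{1}{\kappa-1}$ on edges. One may assume $r < n$, since $r = n$ makes the claimed inequality trivial (as $\kappa \ge 2$).

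The obstacle to pairing $A$ against $B$ directly is that $A(u,v)$ may be positive or negative on an edge $uv$ — the representation $f$ need not be faithful, let alone sign-controlled — so the edge contribution $\sum_{uv \in E(G)} A(u,v)B(u,v)$ has no definite sign. I expect the fix to be to replace $A$ by its entrywise square $W$, $W(u,v) = A(u,v)^2$. This $W$ is the Gram matrix of $v \mapsto f(v) \otimes f(v)$, hence positive semidefinite; it still has unit diagonal and still vanishes on non-edges; and crucially all of its entries are now nonnegative. Since $\tr(WB) \ge 0$ for positive semidefinite $W,B$, and since the diagonal of $WB$ contributes $\tr(W) = n$ while each edge term satisfies $W(u,v)B(u,v) \le -\tfrac{1}{\kappa-1}W(u,v)$ (using $W(u,v) \ge 0$ and the sign of $B$ on edges), I obtain
\[
0 \;\le\; \tr(WB) \;\le\; n - \frac{2}{\kappa-1}\sum_{uv \in E(G)} W(u,v).
\]

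It then remains to bound $\sum_{uv \in E(G)} W(u,v)$ from below, and this is where the rank input re-enters. Since $W$ vanishes on non-edges, $\sum_{uv \in E(G)} W(u,v) = \tfrac12\bigl(\mathbf{1}^{\top} W \mathbf{1} - n\bigr) = \tfrac12\bigl(\norm{A}_F^2 - n\bigr)$, while $\norm{A}_F^2 = \sum_i \lambda_i(A)^2 \ge \tfrac{1}{\rk(A)}\bigl(\sum_i \lambda_i(A)\bigr)^2 \ge n^2/r$ by Cauchy--Schwarz applied to the (at most $r$) nonzero eigenvalues of $A$. Substituting $\sum_{uv \in E(G)} W(u,v) \ge \tfrac12(n^2/r - n) = \tfrac{n(n-r)}{2r}$ into the display and rearranging should give $\tfrac{1}{\kappa-1} \le \tfrac{r}{n-r}$, i.e.\ $\kappa \ge n/r$, which is exactly $\chivec(G)\cdot\msr(G) \ge n$. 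I do not anticipate any real difficulty beyond hitting on the entrywise-square trick, which is the one idea needed; I would note that it costs nothing here (in contrast, combining \Cref{thm_vec_lovasz} with \eqref{eq_theta_rank} would only yield the weaker $\chivec(G) \ge n/\msr(G)^2$), and that the only property of $\chivec(G)$ used is the one in \Cref{def_chi_vec}.
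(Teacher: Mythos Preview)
Your proof is correct. Both you and the paper hit on the same key idea --- replace the Gram matrix $A$ of an $r$-dimensional orthonormal representation by its entrywise square $W$ (equivalently, pass to the tensor-square representation $v \mapsto f(v)\otimes f(v)$) so as to obtain a \emph{nonnegative} orthonormal representation of $G$. The routes diverge in how $\chivec$ is accessed. The paper first establishes an alternative characterization (\Cref{lem_lower}, itself derived from the Schrijver-theta formulation in \Cref{lem_vec_equiv}), namely $\chivec(G) = \max_{g,x}\sum_v \inprod{g(v)}{x}^2$ over nonnegative orthonormal representations $g$ and unit vectors $x$; it then simply evaluates this at $g(v) = f(v)f(v)^\top \in \R^{r\times r}$ and $x = r^{-1/2}I_r$, obtaining $n/r$ in one line with no separate eigenvalue estimate. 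You instead work directly from \Cref{def_chi_vec}: you pair $W$ against the Gram matrix $B$ of a $\chivec$-witness via $\tr(WB) \ge 0$, and then feed in the rank hypothesis through the Cauchy--Schwarz bound $\norm{A}_F^2 \ge n^2/r$. Your argument is more self-contained --- it uses only the primal definition of $\chivec$ and bypasses \Cref{lem_vec_equiv} and \Cref{lem_lower} entirely --- whereas the paper's route, once \Cref{lem_lower} is in hand, yields both \Cref{thm_ng} and \Cref{thm_vec_lovasz} by the same mechanism.
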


\begin{definition}
Given a $d$-dimensional polytope $P$, its \emph{extension complexity} $\xc(P)$ is defined to be the minimum number of facets of a polytope $P' \subseteq \R^{d'}$ such that $P$ is the projection of $P'$ onto some $d$-dimensional subspace. 

\end{definition} 

Shortly after proving \Cref{thm_main}, we learned that Kwan, Sauermann, and Zhao \cite{KSZ22} constructed an $n^{o(1)}$-dimensional polytope with at most $n$ vertices and extension complexity at least $n^{1-o(1)}$. We show that  \Cref{thm_main} can be combined with their argument in order to give an alternate construction of such a polytope, with a slight improvement in the $o(1)$ term.

\begin{definition}
A real-valued matrix $M$ is called \emph{nonnegative} if all of its coordinates $M(i,j)$ are nonnegative and given such a matrix, its \emph{nonnegative rank} $\rkplus(M)$ is defined to be the minimum $r$ for which there exist nonnegative matrices $W_1, \ldots, W_r$ such that $M = \sum_{i=1}^{r}{W_i}$. 

\end{definition} 

Kwan, Sauermann, and Zhao obtain the appropriate polytope by first constructing an $n \times n$ matrix with a ratio of $n^{1-O\left( \frac{ \log{\log{n}} }{ \sqrt{\log{n}} } \right)}$ between its nonnegative rank and its rank, in particular answering a question of Hrube\v{s} \cite{H12}. While their argument is not probabilistic, it does seem to be related to that of \Cref{thm_main} since they use a two-family forbidden intersection theorem due to Sgall \cite{S99}, while we use a two-family forbidden intersection theorem due to Frankl and R\"{o}dl \cite{FR87}. Furthermore, our construction yields a slightly better bound in the exponent, as follows.

\begin{theorem} \label{thm_rank_plus}
There exists a constant $C > 0$ such that for all integers $n \geq 3$, there exists an $n \times n$ nonnegative matrix $M$ such that
\[
\frac{\rkplus(M)}{\rk(M)} \geq n^{1 - C \sqrt{ \frac{\log{\log{n}}}{\log{n}} } }.
\]
\end{theorem}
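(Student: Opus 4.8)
The plan is to combine \Cref{thm_main} with the general machinery relating matrix rank gaps to graph parameters, exactly as Kwan, Sauermann, and Zhao do, but feeding in the stronger construction from \Cref{thm_main}. First I would recall the standard dictionary between nonnegative rank and extension complexity on the one hand, and between rank and semidefinite rank on the other. Given a graph $G$ on $n$ vertices, consider its complement $\overline{G}$ and form the ``slack-type'' matrix $M \in \R^{V(G)\times V(G)}$ with $M(u,v) = 1$ if $uv \in E(G)$ or $u=v$, and $M(u,v)=0$ otherwise (i.e.\ the adjacency matrix of $G$ plus the identity, viewed as a nonnegative matrix). The key observations are: (i) the ordinary rank of $M$ is controlled by an orthonormal representation of $G$ — more precisely $\rk(M) \le O(\msr(G))$, since if $f:V(G)\to\R^d$ is an orthonormal representation then the Gram matrix $(\inprod{f(u)}{f(v)})_{u,v}$ has rank at most $d$ and agrees with $M$ on all off-diagonal entries where $uv\notin E(G)$, and one can patch up the diagonal and the edge entries with a bounded-rank correction provided the representation is chosen with enough freedom; and (ii) the nonnegative rank $\rkplus(M)$ is at least the fractional chromatic number, or more relevantly at least something like $\vartheta(\overline{G})$ or the independence/clique covering structure, because a nonnegative factorization of a $0/1$ matrix of this form yields a covering of the support by combinatorial rectangles, i.e.\ by bicliques in $G$, and a $K_{t,t}$-free graph admits no large biclique, forcing many rectangles.

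Concretely, the key steps in order: (1) Take $G$ to be the $K_{t,t}$-free graph from \Cref{thm_main} with $n$ vertices, at least $n^{2-C\log\log t/\log t}$ edges, and $\msr(G)\le\msr_f(G)\le n^{C\log\log t/\log t}$, and choose $t$ as a function of $n$ so as to optimize — taking $t$ roughly $2^{\sqrt{\log n}}$ makes $\log\log t/\log t \asymp \sqrt{\log\log n / \log n}$, which is where the final exponent comes from. (2) Define $M$ as above (adjacency matrix of $G$ plus identity, or a closely related nonnegative matrix) and show $\rk(M) \le \msr(G) + O(1) \le n^{O(\sqrt{\log\log n/\log n})}$, using \eqref{eq_theta_rank} or directly the orthonormal representation. (3) Lower bound $\rkplus(M)$: since $M$ is $0/1$ with support equal to $E(G)\cup\{\text{diagonal}\}$, any nonnegative-rank-$r$ decomposition $M=\sum_{i=1}^r W_i$ with each $W_i$ rank-$1$ nonnegative gives $r$ combinatorial rectangles $R_i = A_i\times B_i$ covering the support; the $K_{t,t}$-freeness of $G$ means every such rectangle contained in $E(G)$ has $\min(|A_i|,|B_i|) < t$, and a counting/double-counting argument over the $\ge n^{2-o(1)}$ edges forces $r \ge n^{2-o(1)}/(nt) = n^{1-o(1)}/t = n^{1-o(1)}$. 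Alternatively, and more cleanly, invoke $\rkplus(M) \ge \vartheta(\overline{G}) \ge n^{1-o(1)}$ from \Cref{cor_lovasz} together with the standard inequality $\vartheta(\overline{G}) \le \rkplus(M)$ for a suitably normalized $M$ — essentially Lov\'asz's sandwich theorem $\vartheta(\overline G)$ lies between $\omega(G)$-type and $\bar\chi(G)$-type quantities, and nonnegative rank of the (complement of the) adjacency matrix dominates the fractional clique cover number. (4) Combine: $\rkplus(M)/\rk(M) \ge n^{1-o(1)}/n^{o(1)} = n^{1 - C\sqrt{\log\log n/\log n}}$ after reabsorbing constants.

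The main obstacle I expect is getting the lower bound on $\rkplus(M)$ to match $\vartheta(\overline{G})$ rather than a weaker combinatorial quantity (like the biclique cover number, which a priori could be as small as logarithmic for dense graphs). The honest route is not to route through $\vartheta$ at all but to mimic Kwan--Sauermann--Zhao directly: they do not lower bound $\rkplus$ by $\vartheta$ but rather show that for their matrices $\rkplus$ is genuinely large via the forbidden-intersection structure, and here the analogue is that $K_{t,t}$-freeness kills large bicliques so a rectangle cover of $E(G)$ needs $\ge m/(t\cdot n)$-ish pieces — this is the step that must be done carefully, since a rectangle need not lie entirely inside $E(G)$ unless the corresponding entry of every $W_i$ is forced, which uses that $M$ is $0/1$ and the $W_i$ are nonnegative so supports are nested in the support of $M$. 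Once that rectangle-covering lower bound $\rkplus(M)\ge \Omega(m/(tn)) = n^{1-o(1)}$ is in hand, the rest is bookkeeping: pick $t = 2^{\sqrt{\log n}}$, verify $\log\log t/\log t = \Theta(\sqrt{\log\log n/\log n})$, and check the inequality holds for all $n\ge 3$ by adjusting $C$ (handling small $n$ trivially, e.g.\ with the $1\times 1$ matrix where the ratio is $1$). A secondary, more minor, obstacle is the exact relationship $\rk(M)$ vs.\ $\msr(G)$: one must make sure that adding the identity and the edge-entries does not blow up the rank beyond a constant factor of $\msr(G)$, which follows because an orthonormal representation gives a Gram matrix agreeing with $M$ off the support of $G$ and of rank $\le \msr(G)$, and the discrepancy matrix (supported on $E(G)\cup\text{diag}$) can be absorbed — or, cleaner, one notes $\rk(M)\le \chivec(G)\cdot$something via \Cref{thm_ng}, or simply uses that the complement construction in \Cref{thm_main} already packages the bounded-rank certificate.
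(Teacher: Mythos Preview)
Your overall strategy is the paper's: take the $K_{t,t}$-free graph from \Cref{thm_main}, build a nonnegative matrix whose support is (essentially) $E(G)$, upper-bound its rank via the orthonormal representation, and lower-bound its nonnegative rank via the rectangle covering number, using that $K_{t,t}$-freeness forces every monochromatic rectangle to have one side smaller than $2t$. Step (3) of your plan --- the rectangle-cover count $\rkplus(M)\ge \rc(M)\ge m/(2tn)$ --- is exactly what the paper does (via \Cref{lem_rectangle_bound}), and is the right route; the detour through $\vartheta(\overline G)$ you also float would not work, since there is no inequality $\rkplus(M)\ge\vartheta(\overline G)$ for this $M$.

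The genuine gap is in the choice of $M$ and the rank bound. For the $0/1$ matrix $M=A(G)+I$ you cannot ``patch up the edge entries with a bounded-rank correction'': the Gram matrix $N$ of an orthonormal representation has $N(u,v)=\inprod{f(u)}{f(v)}$ on edges, and the discrepancy $M-N$ is an arbitrary real matrix supported on $E(G)$, whose rank is in general comparable to $n$, not to $\msr(G)$. So $\rk(A(G)+I)\le O(\msr(G))$ is simply false. The paper sidesteps this by not using a $0/1$ matrix at all: it takes $g(v)=f(v)f(v)^{\intercal}\in\R^{r\times r}$ and sets
\[
M(u,v)=\inprod{g(u)}{g(v)}_F=\inprod{f(u)}{f(v)}^{2}.
\]
This $M$ is automatically nonnegative, has $\rk(M)\le r^{2}$ because the $g(v)$ live in an $r^{2}$-dimensional space, and --- since $f$ is \emph{faithful} --- has exactly the same support as $A(G)+I$, so your rectangle-cover argument applies verbatim. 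The squaring trick is the one missing idea.

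A minor point: your choice $t=2^{\sqrt{\log n}}$ gives $\log\log t/\log t\asymp(\log\log n)/\sqrt{\log n}$, which is off by a $\sqrt{\log\log n}$ factor from the target. The paper takes $t\approx n^{\sqrt{\log\log n/\log n}}=2^{\sqrt{\log n\,\log\log n}}$, which balances the $t$-term and the $\msr$-term in the exponent to both be $\Theta\bigl(\sqrt{\log\log n/\log n}\bigr)$.
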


\begin{corollary} \label{cor_extension}
There exists a constant $C > 0$ such that for all integers $n \geq 3$, there exists a polytope $P$ with at most $n$ vertices, dimension at most $n^{ C \sqrt{ \frac{\log{\log{n}}}{\log{n}} } }$, and extension complexity 
\[
\xc(P) \geq n^{1 - C \sqrt{ \frac{\log{\log{n}}}{\log{n}} } }.
\]
\end{corollary}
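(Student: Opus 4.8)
The plan is to deduce \Cref{cor_extension} from \Cref{thm_rank_plus} via the standard correspondence between nonnegative rank and extension complexity, following the argument of Kwan, Sauermann, and Zhao \cite{KSZ22}. Let $M$ be the $n \times n$ nonnegative matrix produced by \Cref{thm_rank_plus}, and write $\varepsilon = C_0 \sqrt{\log\log n / \log n}$ for the quantity appearing there. The first point is that $\rkplus$ of an $n \times n$ nonnegative matrix is at most $n$, so the bound $\rkplus(M)/\rk(M) \ge n^{1-\varepsilon}$ forces $\rk(M) \le n^{\varepsilon}$, and since $\rk(M) \ge 1$ it also forces $\rkplus(M) \ge n^{1-\varepsilon}$. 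Thus $M$ has simultaneously small rank and near-maximal nonnegative rank, and it remains to convert this into a polytope whose dimension reflects $\rk(M)$ and whose extension complexity reflects $\rkplus(M)$.

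I would use the following facts. By Yannakakis' theorem $\xc(P) = \rkplus(S_P)$ for the (facet $\times$ vertex) slack matrix $S_P$ of a polytope $P$, and $\rk(S_P) = \dim(P)+1$; more flexibly, if $M_{ij} = b_i - \langle a_i, v_j\rangle$ where each $\langle a_i, x\rangle \le b_i$ is a valid inequality for a polytope $P$ and each $v_j$ is a point of $P$, then $\xc(P) \ge \rkplus(M) - O(1)$. It therefore suffices to realize $M$ in this latter form with $\dim(P) \le \rk(M) + O(1)$ and with $P$ having at most $n$ vertices; appending $O(1)$ rows or columns to $M$, or rescaling rows or columns, is harmless, since none of these operations changes $\rk$ or $\rkplus$ by more than an additive $O(1)$ or a bounded factor.

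Concretely, I would first append an all-ones row to $M$ so that $\mathbf{1}$ lies in its row space (this changes $\rk$ and $\rkplus$ by at most $1$). Taking a rank factorization $M = A^\top B$ with inner dimension $\rk(M)$ and using that $\mathbf{1}$ lies in the row space of $B$, one can change basis so that every column of $B$ has last coordinate equal to $1$; writing the $j$-th column as $(-v_j, 1)$ and the $i$-th row of $A^\top$ as $(a_i, b_i)$ gives $M_{ij} = b_i - \langle a_i, v_j\rangle$. Set $P := \mathrm{conv}\{v_1, \ldots, v_n\} \subseteq \R^{\rk(M)-1}$, so that $\dim(P) \le \rk(M)$ and $P$ has at most $n$ vertices; since $M_{ij} \ge 0$, each $\langle a_i, x\rangle \le b_i$ is valid on $P$ with slack exactly $M_{ij}$ at the point $v_j \in P$. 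The generalized slack inequality then gives $\xc(P) \ge \rkplus(M) - O(1) \ge n^{1-\varepsilon} - O(1)$, while $\dim(P) \le \rk(M) \le n^{\varepsilon}$. Replacing $n$ by $n - O(1)$ at the outset to keep the vertex count at most $n$, and absorbing the $O(1)$ terms and the change of exponent into a slightly larger constant $C$, this produces a polytope with at most $n$ vertices, dimension at most $n^{C\sqrt{\log\log n/\log n}}$, and $\xc(P) \ge n^{1 - C\sqrt{\log\log n/\log n}}$ for all large $n$; small $n$ are handled by taking $C$ large enough that the asserted bounds are vacuous.

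The main obstacle is the conversion in the previous paragraph: one must check carefully that an essentially arbitrary nonnegative matrix can be presented as the (generalized) slack matrix of a genuine polytope of the expected dimension and number of vertices, and that the lower bound "generalized slack matrix $\Rightarrow$ nonnegative rank" applies in this setting. This is precisely the reduction carried out by Kwan, Sauermann, and Zhao \cite{KSZ22}; the only new ingredient here is to feed in the matrix from \Cref{thm_rank_plus} (which ultimately comes from \Cref{thm_main}) in place of the one they construct via Sgall's theorem, and to verify that the improved exponent $1 - O\!\left(\sqrt{\log\log n/\log n}\right)$ is preserved throughout.
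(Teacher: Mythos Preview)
Your proposal is correct and follows essentially the same route as the paper: both deduce \Cref{cor_extension} from \Cref{thm_rank_plus} via the Yannakakis correspondence between nonnegative rank and extension complexity, exactly as in Kwan--Sauermann--Zhao \cite{KSZ22}. The only difference is in packaging: the paper invokes \Cref{lem_yan} (Lemma~2.4 of \cite{KSZ22}) as a black box to obtain a polytope with at most $n$ \emph{facets} and then passes to the polar dual, whereas you unpack that lemma by hand---taking a rank factorization of $M$ and reading off the polytope as the convex hull of the $n$ column points---which yields at most $n$ \emph{vertices} directly and bypasses the duality step. Your explicit construction is a bit more self-contained; the paper's version is shorter but relies on the cited lemma.
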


\section{Proofs}

We will first present the proof of our main result, \Cref{thm_main}. Note that Alon and Szegedy's argument \cite{AS99} relies on a theorem of Frankl and R\"{o}dl \cite{FR87}, which states that there exists $\varepsilon > 0$ such that if 4 divides $k$, then any subset of $ \{+1, -1\}^{k}$ of size $2^{(1-\varepsilon)k}$ has an orthogonal pair. In order to prove \Cref{thm_main}, it would suffice to obtain a two-family version of this result. However, it is not hard to see that such a theorem cannot hold. Indeed, the middle layer subfamilies $\{v \in \{+1, -1\}^{k} : \sum_{i}{v(i)} = 0 \}$ and $\{v \in \{+1, -1\}^{k} : \sum_{i}{v(i)} = 2 \}$ are both of size $2^{(1 - o(1))k}$ and have no orthogonal pairs between them by considering the inner products modulo 4. Thus we shall restrict ourselves to only the middle layer of $ \{+1, -1\}^{k}$ and then apply the following two-family forbidden intersection theorem, which is also due to Frankl and R\"{o}dl \cite{FR87}.

\begin{theorem} \label{t:FR}
Suppose $0 < \eta < \frac{1}{4}$ and two families $\F, \G$ of subsets of $[k]$ are given which satisfy $|A \cap B| \neq \ell$ for $A \in \F, B \in \G$. If $\eta k \leq \ell \leq \left( \frac{1}{2} - \eta \right) k$, then
\[ |\F||\G| \leq 2^{2k(1 - \varepsilon )},\]
where $\varepsilon$ is a positive constant depending only on $\eta$.
\end{theorem}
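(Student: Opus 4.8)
Strictly speaking this theorem is a black box imported from Frankl and R\"odl \cite{FR87}, so the ``proof'' we plan to give is a citation; nevertheless, let me sketch the shape of their argument, since it is the engine behind \Cref{thm_main}. The overall strategy is a \emph{supersaturation} argument: instead of showing directly that forbidding the intersection size $\ell$ forces $|\F||\G|$ to be small, one establishes the stronger statement that whenever $|\F||\G| > 2^{2k(1-\varepsilon)}$ the number of pairs $(A,B) \in \F \times \G$ with $|A \cap B| = \ell$ is exponentially large, which is an even more blatant contradiction with the hypothesis. Extracting the quantitative ``exponentially many forbidden pairs'' (with $\varepsilon$ depending only on $\eta$) from the merely qualitative ``at least one forbidden pair'' is exactly what buys the clean exponential bound.

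The first step is a reduction to uniform families. By dyadic pigeonhole we may pass to subfamilies $\F' \subseteq \F$, $\G' \subseteq \G$ in which every member of $\F'$ has some fixed size $a$ and every member of $\G'$ has some fixed size $b$, at the cost of a factor $k^2$ that is absorbed after shrinking $\varepsilon$. One then reduces to the range of uniformities in which $\ell$ sits well inside the feasible interval $[\max(0, a+b-k), \min(a,b)]$ of possible intersection sizes; the remaining boundary cases --- $a$ or $b$ too small, or both too large --- are disposed of by the crude bound $|\F'||\G'| \le \binom{k}{a}\binom{k}{b}$, and it is precisely the hypotheses $\eta k \le \ell \le \left(\tfrac12 - \eta\right)k$ and $\eta < \tfrac14$ that make this dichotomy go through (in particular near-maximality of the product already forces $a,b$ close to $k/2$).

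The core is the supersaturation estimate, and the idea is roughly as follows. Partition $[k]$ into $k/r$ blocks of constant size $r = r(\eta)$ and study the traces of $\F'$ and $\G'$ on these blocks. If $\log|\F'|$ is within $\varepsilon k$ of $\log\binom{k}{a}$, then a uniformly random $A \in \F'$ is, on most blocks, nearly as spread out as a random $a$-set would be, and likewise for a random $B \in \G'$; this lets one choose $A \in \F'$ and $B \in \G'$ agreeing on all but a bounded fraction of blocks while retaining, on a positive-density set of ``free'' blocks, enough independent choices to nudge $|A \cap B|$ up or down one element at a time through a window of length $\Omega_\eta(k)$. Since $\ell$ lies in the bulk it lies in this window, so by the discrete intermediate value principle the value $|A \cap B| = \ell$ is attained, and counting the ways of attaining it yields the exponential lower bound.

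The main obstacle --- and the reason this is genuinely a theorem rather than a routine exercise --- is making the nudging argument both \emph{exact} and \emph{uniform in $k$}: one needs the value $\ell$ itself to be hit, not merely something near $\ell$, which is where parity and residue obstructions lurk (the middle-layer families in the discussion preceding \Cref{t:FR} show that outside the regime $\eta k \le \ell \le \left(\tfrac12-\eta\right)k$ the achievable window can collapse and such obstructions survive), and one needs the block size $r$, the entropy deficits, and all the pigeonhole losses to be controlled by $\eta$ alone, so that $\varepsilon = \varepsilon(\eta)$ carries no hidden dependence on $k, a, b, \ell$. An alternative, more modern route to the same conclusion is the junta/regularity method: approximate each of $\F, \G$ by a bounded junta, reduce to the situation where both families essentially depend on only a bounded number of coordinates, and finish by a direct combinatorial analysis on those coordinates together with a random sparsification of the remaining ones.
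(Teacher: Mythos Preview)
You are correct that the paper treats \Cref{t:FR} purely as a black box: it is stated with attribution to Frankl and R\"odl \cite{FR87} and invoked without any proof or sketch. In that sense your ``proof'' (a citation) matches the paper exactly.

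The sketch you append is extra material not present in the paper. As a heuristic outline it is reasonable, but it does not quite reflect the actual structure of the Frankl--R\"odl argument. Their proof is not a block-partition/nudging argument of the kind you describe; it is a density-increment induction on the ground set via \emph{link families}: if $|\F||\G|$ is large, then for a positive proportion of elements $x \in [k]$ the link $\F_x = \{A \setminus \{x\} : x \in A \in \F\}$ is still dense in $[k]\setminus\{x\}$, and similarly for $\G$, which lets one descend to a smaller ground set with the forbidden intersection shifted by one and iterate. The supersaturation conclusion and the reduction to uniform families are indeed part of the picture, but the ``partition into constant-size blocks and nudge through a window'' mechanism you sketch is closer in spirit to later regularity/hypercontractivity approaches to forbidden-intersection problems than to the original \cite{FR87}. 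If you intend to include a sketch, either adjust it to the link-induction framework or flag it explicitly as a modern alternative route rather than as ``their argument''.
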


In order to lower bound the number of edges in the graph we construct, we will also need the following lemma.

\begin{lemma} \label{lem_lovasz_edges}
For any graph $G$ with $n$ vertices and $m$ edges, we have
\[
\vartheta\left( \overline{G} \right)^2 \leq 2m + n. 
\]
\end{lemma}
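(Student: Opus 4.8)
The plan is to unwind the definition of $\vartheta\left(\overline{G}\right)$ into a statement about the Frobenius norm of a sparse Gram matrix. By \Cref{def_lovasz}, $\vartheta\left(\overline{G}\right)$ equals $\lambda_1(M)$ for some orthonormal representation $f$ of $\overline{\overline{G}} = G$, where $M \in \R^{V(G) \times V(G)}$ is the Gram matrix $M(u,v) = \inprod{f(u)}{f(v)}$. I would fix such an optimal $f$ and the corresponding $M$, and then extract two structural features.

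First, $M$ is a Gram matrix, hence positive semidefinite, so all of its eigenvalues are nonnegative and in particular
\[
\lambda_1(M)^2 \leq \sum_i \lambda_i(M)^2 = \tr\!\left(M^2\right) = \sum_{u, v \in V(G)} M(u,v)^2 .
\]
Second, the support of $M$ is highly restricted: since $f$ is an orthonormal representation of $G$, we have $M(v,v) = \norm{f(v)}^2 = 1$ for every vertex $v$, while $M(u,v) = 0$ whenever $u \neq v$ and $uv \notin E(G)$. Thus the only nonzero entries of $M$ lie on the diagonal or at positions $(u,v)$ with $uv \in E(G)$.

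Combining these, I would split $\sum_{u,v} M(u,v)^2$ into the diagonal contribution, which is exactly $n$, and the off-diagonal contribution, which ranges only over ordered pairs $(u,v)$ with $uv \in E(G)$ — of which there are $2m$. For each such pair, Cauchy--Schwarz gives $|M(u,v)| = |\inprod{f(u)}{f(v)}| \leq \norm{f(u)} \norm{f(v)} = 1$, so each term is at most $1$. Therefore $\tr\!\left(M^2\right) \leq n + 2m$, and the chain $\vartheta\left(\overline{G}\right)^2 = \lambda_1(M)^2 \leq \tr\!\left(M^2\right) \leq 2m + n$ completes the proof.

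There is no serious obstacle here; the only points requiring care are getting the direction of complementation right in \Cref{def_lovasz} (so that the zero pattern of $M$ is governed by the non-edges of $G$ itself, not of $\overline{G}$) and remembering that each undirected edge contributes two ordered off-diagonal entries to the sum.
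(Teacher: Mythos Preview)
Your proof is correct and follows exactly the same approach as the paper's: bound $\lambda_1(M)^2$ by $\tr(M^2) = \sum_{u,v} M(u,v)^2$ and then use the support and unit-norm structure of the Gram matrix of an orthonormal representation of $G$ to bound this sum by $2m + n$. The paper's version is simply the one-line compression of your argument.
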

\begin{proof}
If we let $M$ be the Gram matrix corresponding to an orthonormal representation of $G$, then
\[
\lambda_1(M)^2 \leq \tr(M^2) = \sum_{u,v \in V(G)}{M(u,v)^2} \leq 2m + n. \qedhere
\]
\end{proof}

Given vectors $v_1, \ldots, v_m \in \R^k$, recall that their tensor product $v_1 \otimes \ldots \otimes v_m$ can be seen as a vector in $\R^{k^m}$. Also, given families $A_1, \ldots, A_m \subseteq \R^k$, we define $A_1 \otimes \ldots \otimes A_m = \{ v_1 \otimes \ldots \otimes v_m : v_i \in A_i\}$. Moreover, in the following argument, we will implicitly assume all parameters are functions of $t$ and we will write $f = o(1)$ if $\lim_{t \rightarrow \infty}{f(t)} = 0$. 

\begin{proof}[Proof of \Cref{thm_main}]
We first note that we may assume $t$ is sufficiently large. Indeed, since $\msr(G) \leq n$ for any graph on $n$ vertices, we can choose $C$ large enough so that the theorem holds for all $t$ sufficiently small. Now let $k, m$ be positive integers to be chosen later with $k$ divisible by 4 and define $M_k = \left\{ v \in \{+1, -1\}^{k} : \sum_{i}{v(i)} = 0  \right \} $ to be the middle level of $\{+1,-1\}^k$, so that $|M_k| = \binom{k}{k/2} = 2^{(1 - o(1))k}$ and define $Q = \underbrace{M_k \otimes \ldots \otimes M_k}_\text{m}$ to be the tensor product of $m$ copies of $M_k$, so that we have $|Q| = |M_k|^m = 2^{km(1-o(1))}$. 

Let $S_1, \ldots, S_m, T_1, \ldots, T_m \subseteq M_k$ and let $S = S_1 \otimes \ldots \otimes S_m, T =  T_1 \otimes \ldots \otimes T_m$. We call the pair $(S,T)$ \emph{dangerous} if $u,v$ are non-orthogonal for all $u \in S, v \in T$. Note that for all $u = u_1 \otimes \ldots \otimes u_m \in S$ and $v = v_1 \otimes \ldots \otimes v_m \in T$, we have
\[ \inprod{u}{v} = \inprod{u_1}{v_1} \inprod{u_2}{v_2} \ldots \inprod{u_m}{v_m},\]
and thus $(S,T)$ is a dangerous pair if and only if $x, y$ are non-orthogonal for all $x \in S_i, y \in T_i$. For any $x \in \{+1, -1\}^k$, we let $A_x = \{ i \in [k] : x(i) = 1\}$ and observe that if $x, y \in M_k$ then 
\[ 
\inprod{x}{y} = k - 2 |A_x \triangle A_y|  = 4 |A_x \cap A_y| - k.
\]
Thus if we let $i \in [m]$ and define $\F_i = \{ A_x : x \in S_i \}$ and $\G_i = \{ A_y : y \in T_i \}$, then we have $|A \cap B| \neq k/4$ for all $A \in \F_i, B \in \G_i$, so that we may apply \Cref{t:FR} with $\ell = k/4, \eta = 1/8$ to conclude that $|\F_i| |\G_i| \leq 2^{2k(1- \varepsilon)}$ for some fixed $\varepsilon > 0$. It follows that if $(S,T)$ is a dangerous pair, then
\[ 
|S||T| = \prod_{i=1}^{m}{|\F_i| |\G_i|} \leq 2^{2km(1- \varepsilon)}.
\]

We claim that for any $S, T \subseteq Q$ such that $u,v$ are non-orthogonal for all $u \in S, v \in T$, there exist $S' \supseteq S, T' \supseteq T$ such that $(S', T')$ is a dangerous pair. Indeed, given $S \subseteq Q$ and $i \in [m]$, let $\pi_i(S) = \{ u_i : u_1 \otimes \ldots \otimes u_m \in S\} $ denote the $i$th projection of $S$, and define $S' = \pi_1(S) \otimes \ldots \otimes \pi_m(S)$ and $T' = \pi_1(T) \otimes \ldots \otimes \pi_m(T)$. Clearly, $S' \supseteq S$ and $T' \supseteq T$, so suppose for sake of contradiction that $(S',T')$ is not a dangerous pair. Then there exists $i \in [m]$ and there exist $u_i \in  \pi_i(S), v_i \in \pi_i(T) $ such that $\inprod{u_i}{v_i} = 0$. By definition of $\pi_i$, there exist $u_j, v_j \in M_k$ for all $j \in [m] \backslash \{i\}$ such that $u = u_1 \otimes \ldots \otimes u_m \in S$ and $v = v_1 \otimes \ldots \otimes v_m \in T$, so that we conclude
\[ \inprod{u}{v} = \inprod{u_i}{v_i} \prod_{j \in [m] \backslash \{i\}}{\inprod{u_j}{v_j}} = 0,\] 
contradicting our choice of $S,T$.

Now let $k$ be the largest integer divisible by 4 such that $t \geq  \frac{2^{k+3}}{\varepsilon k} $ (so that $k = (1+o(1))\log{t}$) and let $m = \left \lceil \frac{2 \log{n} }{\varepsilon k} \right \rfloor$. It will suffice for us to show that with positive probability, a list of vectors $v_1, \ldots, v_n$ chosen independently and uniformly at random from $Q$ has no large overlap with any dangerous pairs, that is $|\{i : v_i \in S \}| < t$ or $|\{i : v_i \in T\}| < t$ for all dangerous pairs $(S,T)$. Indeed, given such a list, the previous claim implies that the graph $G$ with vertex set $V(G) = [n]$ and edge set $E(G) = \{ ij : \inprod{v_i}{v_j} \neq 0 \}$ is $K_{t,t}$-free and furthermore, $f(i) = v_i$ defines a faithful orthonormal representation of $G$ in $\R^{k^{m}}$, so that
\[
\msr_f(G) \leq k^{m} \leq 2^{ \frac{4 \log{n}}{\varepsilon k} \log{k}} = n^{\left( \frac{4}{\varepsilon}+o(1)\right) \frac{\log{\log{t}} }{ \log{t} }}.
\]
Moreover, using \Cref{lem_lovasz_edges}, \eqref{eq_theta_rank}, and the fact that $\msr(G) \leq \msr_f(G)$, it follows that the number of edges in $G$ is at least
\[
\left( \left( n / \msr_f(G) \right)^2 - n \right)/2 \geq n^{2 - \left( \frac{8}{\varepsilon}+o(1)\right) \frac{\log{\log{t}} }{ \log{t} } }.
\]

For any dangerous pair $(S,T)$, define $E_{S,T}$ to be the event that $|\{i : v_i \in S \}| \geq t$ and $|\{i : v_i \in T\}| \geq t$. Without loss of generality, we may assume that $|S| \leq |T|$ and so $|S|^2 \leq |S||T| \leq 2^{2km(1- \varepsilon)}$. Hence, a union bound implies that
\begin{align*}
\p[E_{S,T}] 
\leq \sum_{\substack{A \subseteq [n] \\ |A| = t } } \p[ v_i \in S \text{ for all } i \in A ] 
= \binom{n}{t} \left( \frac{|S|}{|Q|} \right)^t
\leq \left(\frac{n |S|}{|Q|}\right)^t 
&\leq \left( \frac{ 2^{ \varepsilon km / 2} 2^{km(1 - \varepsilon)} }{ 2^{km(1-o(1)) } } \right)^t\\
&= 2^{tkm( - \varepsilon/2 + o(1))}.
\end{align*}
We also observe that the number of dangerous pairs is at most $\left(2^{2^k}\right)^{2m} = 2^{2^{k+1}m}$, so that when $t$ is sufficiently large, applying another union bound yields the desired result
\[
\p\left[E_{S,T} \text{ for some dangerous pair } (S,T) \right] \leq 2^{2^{k+1}m} 2^{tkm( - \varepsilon / 2 + o(1))} < 1. \qedhere
\]
\end{proof}

We now turn our attention towards proving \Cref{thm_vec_lovasz} and \Cref{thm_ng}, for which we will need to establish an alternative characterization of the vector chromatic number. As previously mentioned, this parameter is known to be equivalent to Schrijver's theta function \cite{S79}, so our starting point is the following.

\begin{lemma} \label{lem_vec_equiv}
$\chivec(G)$ is the maximum of $\sum_{u,v \in V(G)}{M(u,v)}$ over all positive semidefinite matrices $M \in \R^{V(G) \times V(G)}$ satisfying $\tr(M) = 1$, $M(u,v) = 0$ if $uv \in E(G)$, and $M(u,v) \geq 0$ if $uv \notin E(G)$.
\end{lemma}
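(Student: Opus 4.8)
The plan is to prove the identity by semidefinite-programming duality; this is essentially the Karger--Motwani--Sudan \cite{KMS98} identification of the vector chromatic number with a $\vartheta$-type program, transposed from the strict to the non-strict version. Write $\Theta(G)$ for the maximum of $\sum_{u,v\in V(G)}M(u,v)$ over positive semidefinite $M$ with $\tr(M)=1$ such that $M(u,v)=0$ whenever $uv\notin E(G)$ and $M(u,v)\ge 0$ whenever $uv\in E(G)$. The feasible set is a closed subset of the compact set of trace-one positive semidefinite matrices, so the maximum is attained and $\Theta(G)$ is well defined; I may assume $E(G)\neq\emptyset$, the edgeless case being handled separately by convention. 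The proof has two halves: $\Theta(G)\le\chivec(G)$ via a short positive-semidefiniteness estimate, and $\chivec(G)\le\Theta(G)$ via the dual program and Slater's condition.

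\emph{First, $\Theta(G)\le\chivec(G)$.} Fix a feasible $M$ and let $N$ be the Gram matrix of a vector $\chivec(G)$-colouring of $G$ --- such a colouring exists, since the minimum in \Cref{def_chi_vec} is attained by a compactness argument on Gram matrices --- so $N\succeq 0$, $N(u,u)=1$, and $N(u,v)\le -1/(\chivec(G)-1)$ on edges. Since $M,N\succeq 0$ we have $\langle M,N\rangle\ge 0$. Decomposing $\langle M,N\rangle=\sum_{u,v}M(u,v)N(u,v)$ into its diagonal part (equal to $\tr(M)=1$), its off-diagonal non-edge part (equal to $0$, as $M$ vanishes there), and its edge part, and using $M(u,v)\ge 0$ and $N(u,v)\le -1/(\chivec(G)-1)$ on edges together with $\sum_{uv\in E(G)}2M(u,v)=\sum_{u\neq v}M(u,v)=\sum_{u,v}M(u,v)-1$, we get
\[
0 \;\le\; \langle M,N\rangle \;\le\; 1-\frac{\sum_{u,v}M(u,v)-1}{\chivec(G)-1}\,,
\]
hence $\sum_{u,v}M(u,v)\le\chivec(G)$; since $M$ was arbitrary, $\Theta(G)\le\chivec(G)$.

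\emph{Second, $\chivec(G)\le\Theta(G)$.} The program defining $\Theta(G)$ is an SDP, and $M=\tfrac1n I$ is strictly feasible (positive definite, trace one, meeting the sign constraints with value $0$), so strong duality holds and $\Theta(G)$ equals the infimum of the dual. Forming the Lagrangian --- a free scalar $y$ paired with $\tr(M)=1$, free scalars with the equalities $M(u,v)=0$ on non-edges, nonnegative scalars with the inequalities $M(u,v)\ge 0$ on edges, and a positive semidefinite matrix with $M\succeq 0$ --- one finds that the dual is
\[
\min\big\{\,y \;:\; S=S^{\top},\ S(u,u)=y\ \forall u,\ S(u,v)\le 0\ \forall\,uv\in E(G),\ S-J\succeq 0\,\big\},
\]
where $J$ is the all-ones matrix and $S$ is unconstrained on the non-edge positions. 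For a dual-feasible $S$: a zero diagonal entry of the positive semidefinite matrix $S-J$ would force its entire row to vanish, contradicting $S(u,v)\le 0<1$ on an incident edge, so $(S-J)(u,u)=y-1>0$; writing $S-J=B^{\top}B$ and letting $f(u)$ be the $u$-th column of $B$ divided by $\sqrt{y-1}$ gives unit vectors with $\langle f(u),f(v)\rangle=(S(u,v)-1)/(y-1)\le -1/(y-1)$ on edges, so $\chivec(G)\le y$. Thus $\chivec(G)\le\inf\{y:\text{dual-feasible}\}=\Theta(G)$, and with the first half this yields $\chivec(G)=\Theta(G)$.

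The routine steps are the positive-semidefiniteness estimate in the first half and, in the second half, reading off the vector colouring from a dual solution. The one delicate point is forming the dual SDP with the correct signs and invoking strong duality via the explicit Slater point $M=\tfrac1n I$; this is where I expect the work to lie.
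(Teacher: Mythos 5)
The paper offers no proof of \Cref{lem_vec_equiv} at all --- it is invoked as the known identification of $\chivec$ with Schrijver's theta function of the complement --- so there is nothing to compare line by line; your SDP-duality argument is the standard route (essentially Karger--Motwani--Sudan transposed to the non-strict setting) and, as a self-contained proof, it is correct in both directions: the weak-duality half via $\langle M,N\rangle\ge 0$ and the reading-off of a vector colouring from a dual-feasible $S$ are both sound, and your derivation of the dual has the right signs.

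Three caveats. First, and most importantly, your $\Theta(G)$ does not match the lemma as printed: the lemma says $M(u,v)=0$ if $uv\in E(G)$ and $M(u,v)\ge 0$ if $uv\notin E(G)$, while you impose $M(u,v)=0$ on non-edges and $M(u,v)\ge 0$ on edges. Your version is the correct one --- the printed lemma has the two conditions transposed (for $G=K_n$ the printed constraints force $M$ diagonal, giving maximum $1$ rather than $\chivec(K_n)=n$, and the matrix $M'$ built in the paper's proof of \Cref{lem_lower} satisfies your constraints, not the printed ones) --- but you should say explicitly that you are proving the intended statement rather than the literal one. Second, your Slater point $M=\tfrac1n I$ satisfies the sign constraints on edges only with equality, so it is not strictly feasible in the naive sense; you need the refined form of Slater's condition in which affine inequality constraints are exempt from strictness, with only the conic constraint $M\succ 0$ required to hold strictly. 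This is standard but should be named, since it is exactly the point you identify as delicate. Third, the edgeless case is not merely a convention: there $\Theta(G)=1$ while $\chivec(G)=2$ under \Cref{def_chi_vec}, so the identity genuinely fails for graphs with no edges; this is harmless for the paper's applications but worth stating as an explicit hypothesis rather than waving at.
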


To state our alternative characterization, we need the following definition.

\begin{definition} \label{def_nonnegative}
Let $R$ be a Euclidean space with inner product $\inprod{\cdot}{\cdot}$. We call an orthonormal representation $f : V(G) \rightarrow R$ \emph{nonnegative} if $\inprod{f(u)}{f(v)} \geq 0$ for all vertices $u,v \in V(G)$.
\end{definition}

\begin{lemma} \label{lem_lower}
$\chivec(G)$ is maximum of $\sum_{v \in V(G)}{\inprod{f(v)}{x}^2}$ over all nonnegative orthonormal representations $f : V(G) \rightarrow R$ and all unit vectors $x \in R$.
\end{lemma}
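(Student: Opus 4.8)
The plan is to derive the characterization in Lemma \ref{lem_lower} from the one in Lemma \ref{lem_vec_equiv} by passing between a positive semidefinite matrix $M$ and a Gram decomposition $M(u,v) = \inprod{g(u)}{g(v)}$. The subtlety is that Lemma \ref{lem_vec_equiv} has the normalisation $\tr(M)=1$, which says $\sum_{v}\norm{g(v)}^2 = 1$ rather than that each $g(v)$ is a unit vector, so the vectors $g(v)$ do \emph{not} directly form an orthonormal representation. I would fix this by rescaling: write $g(v) = \norm{g(v)}\, f(v)$ where $f(v)$ is a unit vector (choosing $f(v)$ arbitrarily when $g(v)=0$). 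The orthogonality and nonnegativity conditions $M(u,v)=0$ for $uv\in E(G)$ and $M(u,v)\ge 0$ for $uv\notin E(G)$ translate, for distinct $u,v$ with $g(u),g(v)\neq 0$, into $\inprod{f(u)}{f(v)}=0$ and $\inprod{f(u)}{f(v)}\ge 0$ respectively, so $f$ is a nonnegative orthonormal representation (after handling the zero vectors, see below).

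Next I would introduce the unit vector $x$. Since $M$ is PSD of trace $1$, spectrally decompose $M = \sum_i \lambda_i x_i x_i^\top$ with $\lambda_i \ge 0$, $\sum_i \lambda_i = 1$, and $x_i$ orthonormal; then $\sum_{u,v} M(u,v) = \sum_i \lambda_i \big(\sum_v x_i(v)\big)^2$ is a convex combination, hence is at most $\max_i \big(\sum_v x_i(v)\big)^2$, and this maximum is attained by taking $M = x_* x_*^\top$ for the maximising eigenvector $x_*$. So in the optimum we may assume $M$ has rank $1$, i.e. $g(v) = x(v)\, e$ for a single unit vector $e$ and reals $x(v)$; equivalently $M(u,v) = x(u)x(v)$. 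But it is cleaner to argue directly in both directions of the inequality rather than reduce to rank $1$. For the direction $\chivec(G) \le \sup_{f,x} \sum_v \inprod{f(v)}{x}^2$: given an optimal $M$ with Gram vectors $g(v)$ in a space $R$, set $f(v) = g(v)/\norm{g(v)}$ as above and pick a unit vector $x\in R$ with $\inprod{g(v)}{x} = ?$—here I want $\sum_v \inprod{f(v)}{x}^2 \ge \sum_{u,v} M(u,v) = \norm{\sum_v g(v)}^2$, so the natural choice is $x = \big(\sum_v g(v)\big)\big/\norm{\sum_v g(v)}$, giving $\sum_v \inprod{f(v)}{x}^2 \ge \sum_v \inprod{g(v)}{x}^2 \ge \big(\sum_v \inprod{g(v)}{x}\big)^2 / \big(\sum_v \norm{g(v)}^2\big)$ via Cauchy–Schwarz together with $\tr(M)=1$, and $\sum_v \inprod{g(v)}{x} = \norm{\sum_v g(v)} = \sqrt{\sum_{u,v}M(u,v)}$; one must check $\inprod{g(v)}{x}^2 \le \inprod{f(v)}{x}^2\norm{g(v)}^2 \le \inprod{f(v)}{x}^2$ is the wrong direction, so instead I keep $f(v)=g(v)$ scaled so that... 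I would instead simply take $M$ rank $1$ from the start (justified by the convexity argument above), so $M(u,v)=x(u)x(v)$ with $x(v)\ge 0$, $\sum_v x(v)^2 = 1$; then $f(v) := $ the unit vector $e$ for every $v$ with $x(v)>0$ won't respect orthogonality unless $G$ has no edges, so here I genuinely need the nonrank-$1$ $M$: take its Gram vectors $g(v)$, $f(v) = g(v)/\norm{g(v)}$, and $x = \sum_v g(v) / \norm{\sum_v g(v)}$, and use $\inprod{f(v)}{x} \ge \inprod{g(v)}{x} \ge 0$ which holds because $\norm{g(v)}\le 1$—wait, that needs $\norm{g(v)}\le 1$, true since $\sum \norm{g(v)}^2 = 1$—to get $\sum_v \inprod{f(v)}{x}^2 \ge \sum_v \inprod{g(v)}{x}^2 \ge \big(\sum_v\inprod{g(v)}{x}\big)^2 / n$; that loses a factor $n$. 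The clean fix, which I would actually write, is: $\sum_v \inprod{f(v)}{x}^2 \ge \sum_v \inprod{g(v)}{x}^2 = x^\top M x \ge \big(\sum_{u,v} M(u,v)\big)$ is false in general, but $\sum_v \inprod{g(v)}{x} = \inprod{\sum_v g(v)}{x} = \norm{\sum_v g(v)} = \sqrt{\sum_{u,v}M(u,v)}$, so by Cauchy–Schwarz in the coordinates $\big(\inprod{g(v)}{x}\big)_v$ we get $\sum_v \inprod{g(v)}{x}^2 \ge \frac1n\big(\sum_v \inprod{g(v)}{x}\big)^2$; to avoid the $\tfrac1n$ I replace $g(v)$ by $f(v)$ only in the \emph{final} inequality and note $\inprod{g(v)}{x}\le \inprod{f(v)}{x}$ since $\norm{g(v)}\le 1$ and $\inprod{f(v)}{x}\ge0$, hence $\sum_v \inprod{g(v)}{x}^2 \le \sum_v \inprod{f(v)}{x}^2$, and separately $\sum_v\inprod{g(v)}{x}^2 \ge \big(\sum_v\inprod{g(v)}{x}\big)^2/\sum_v\norm{g(v)}^2 = \sum_{u,v}M(u,v)$ by Cauchy–Schwarz with weights $\norm{g(v)}^2$. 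Chaining gives $\sum_v\inprod{f(v)}{x}^2 \ge \sum_{u,v}M(u,v) = \chivec(G)$.

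For the reverse direction $\chivec(G) \ge \sum_v \inprod{f(v)}{x}^2$ for any nonnegative orthonormal representation $f$ and unit $x$: set $c = \sum_v \inprod{f(v)}{x}^2 = x^\top N x$ where $N(u,v) = \inprod{f(u)}{f(v)}$ is PSD; define the matrix $M$ by $M(u,v) = \inprod{f(u)}{x}\inprod{f(v)}{x}$, i.e. $M = (Nx)(Nx)^\top / $— no: take $M(u,v) := \inprod{f(u)}{x}\inprod{f(v)}{x}$, which is rank-$1$ PSD, with $\tr(M) = \sum_v \inprod{f(v)}{x}^2 = c$, $M(u,v)\ge0$ always (nonnegativity of the representation gives $\inprod{f(v)}{x}\ge 0$? not automatically — but I can replace $x$ by its component in $\mathrm{span}\{f(v)\}$ and flip signs, or more simply use that if some $\inprod{f(v)}{x}<0$ we only help ourselves; actually for the $\ge$ direction it suffices to exhibit \emph{some} feasible $M$, and I can WLOG assume $\inprod{f(v)}{x}\ge0$ for all $v$ by the argument that the optimum of the RHS is unchanged — hmm, that is not obvious). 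The robust route: set $M = \tfrac1c\, y y^\top$ with $y_v = \inprod{f(v)}{x}$; then $\tr(M)=1$, $M(u,v) = y_u y_v/c = 0$ when $uv\in E(G)$ since then $\inprod{f(u)}{f(v)}=0$ forces... no, orthogonality of $f(u),f(v)$ does not force $\inprod{f(u)}{x}=0$. So rank-$1$ $M$ is wrong here. Instead take $M = \tfrac{1}{?}\, $ a suitable truncation: this is exactly the place where the standard proof (modelled on Karger–Motwani–Sudan) does something slightly clever, and \textbf{this is the main obstacle} — constructing, from $(f,x)$, a PSD matrix $M$ feasible for Lemma \ref{lem_vec_equiv} with $\sum_{u,v}M(u,v) \ge \sum_v\inprod{f(v)}{x}^2$. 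I expect the right construction to be $M(u,v) = \inprod{f(u)}{x}\inprod{f(v)}{x}$ after first \emph{projecting} $x$ onto a suitable subspace and absorbing signs, or equivalently to use the matrix $M = P^\top(xx^\top)P$ where $P$ is the matrix whose columns are the $f(v)$; one checks $M \succeq 0$, $\tr(M) = x^\top P P^\top x$, and $M(u,v) = \inprod{f(u)}{x}\inprod{f(v)}{x}$, then rescales by $1/\tr(M)$ and invokes Lemma \ref{lem_vec_equiv}. Once the correct feasible $M$ is pinned down, both inequalities are one-line Cauchy–Schwarz arguments, and the duplicate-vector / zero-vector edge cases are handled by noting that a vertex with $f(v)=0$ contributes $0$ to every sum and can be assigned any unit vector orthogonal to all others by enlarging $R$.
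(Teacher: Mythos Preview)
Your proposal has a genuine gap in the direction $\chivec(G)\ge \sum_v\inprod{f(v)}{x}^2$. All of your attempted constructions for the feasible matrix $M$ in \Cref{lem_vec_equiv} are rank-one matrices of the form $M(u,v)=\inprod{f(u)}{x}\inprod{f(v)}{x}$ (your $P^\top(xx^\top)P$ is the same matrix), and as you correctly note, these do \emph{not} vanish on edges: orthogonality of $f(u),f(v)$ says nothing about $\inprod{f(u)}{x}\inprod{f(v)}{x}$. The missing idea is not to project $x$, but to leave the Gram matrix $N(u,v)=\inprod{f(u)}{f(v)}$ intact and only reweight the \emph{vertices}. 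Since $N$ has nonnegative entries (this is exactly where the nonnegativity hypothesis on $f$ is used), by Perron--Frobenius its top eigenvalue $\lambda_1(N)$ has a nonnegative unit eigenvector $y$. Set $M'(u,v)=y(u)\,y(v)\,\inprod{f(u)}{f(v)}$: this is the Gram matrix of the vectors $y(v)f(v)$, hence PSD; it has trace $\sum_v y(v)^2=1$; it vanishes on edges because $\inprod{f(u)}{f(v)}=0$ there; and it is nonnegative off edges because $y\ge 0$ and $f$ is a nonnegative representation. Then $\sum_{u,v}M'(u,v)=y^\top N y=\lambda_1(N)$, and $\lambda_1(N)=\max_{\|x\|=1}\sum_v\inprod{f(v)}{x}^2$ because $N=L^\top L$ and $LL^\top=\sum_v f(v)f(v)^\top$ have the same nonzero spectrum, where $L$ has columns $f(v)$.

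There is also a slip in your $\le$ direction: the step ``$\sum_v\inprod{g(v)}{x}^2\ge\big(\sum_v\inprod{g(v)}{x}\big)^2/\sum_v\norm{g(v)}^2$ by Cauchy--Schwarz with weights $\norm{g(v)}^2$'' is false as written (try $g(v)=e_v/\sqrt n$ and $x=(1,\dots,1)/\sqrt n$: the left side is $1/n$ and the right side is $1$). What does work is applying Cauchy--Schwarz directly to $\sum_v\inprod{g(v)}{x}=\sum_v\norm{g(v)}\cdot\inprod{f(v)}{x}$, giving $\big(\sum_v\inprod{g(v)}{x}\big)^2\le\big(\sum_v\norm{g(v)}^2\big)\big(\sum_v\inprod{f(v)}{x}^2\big)=\sum_v\inprod{f(v)}{x}^2$, which is the bound you want without the intermediate $\sum_v\inprod{g(v)}{x}^2$. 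The paper sidesteps even this by again passing through $\lambda_1(N)$: with $y(v)=\norm{g(v)}$ a unit vector, $\lambda_1(N)\ge y^\top N y=\sum_{u,v}M'(u,v)=\chivec(G)$.
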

\begin{proof}
Let $f : V(G) \rightarrow R$ be a nonnegative orthonormal representation of $G$. If we let $L$ be the matrix whose columns are the vectors $\{f(v): v \in V(G)\}$, then their Gram matrix $M = L^\intercal L$ has the same nonzero eigenvalues as $L L^\intercal = \sum_{v \in V(G)}{f(v) {f(v)}^\intercal}$ and so 
\begin{equation} \label{eq_duality}
\lambda_1(M) = \lambda_1\left(\sum_{v \in V(G)}{f(v) {f(v)}^\intercal} \right) 
= \max_{ ||x|| = 1} \sum_{v \in V(G)}{\inprod{f(v)}{x}^2}.
\end{equation}
Now let $y : V(G) \rightarrow \R$ be a unit eigenvector of $\lambda_1(M)$ and note that it has no negative coordinates by the Perron-Frobenius theorem. Also, let $M'$ be the Gram matrix of the vectors $\{ y(v) f(v) : v \in V(G) \}$, so that $M'_{u,v} = y(u) y(v) \inprod{f(u)}{f(v)}$. It is easy to see that $M'$ satisfies the conditions of \Cref{lem_vec_equiv} and thus, using \eqref{eq_duality} we conclude that
\[
\chivec(G) \geq \sum_{u,v \in V(G)}{M'_{u,v}}
= \sum_{u,v \in V(G)}{y(u) y(v) \inprod{f(u)}{f(v)}}
= \lambda_1(M)
= \max_{ ||x|| = 1} \sum_{v \in V(G)}{\inprod{f(v)}{x}^2}.
\]

The other direction follows by reversing the argument given above. Indeed, if we let $M'$ be a matrix satisfying the conditions of \Cref{lem_vec_equiv} such that $\chivec(G) = \sum_{u,v \in V(G)}{M'(u,v)}$, then since $M'$ is positive semidefinite, it is the Gram matrix of some set of vectors $\{g(v) : v \in V(G)\}$, i.e.\ $M'(u,v) = \inprod{g(u)}{g(v)}$ for $u,v \in V(G)$. Then we define $f(v) = g(v)/||g(v)||$ (if $g(v) = 0$, we can set $f(v)$ to be any unit vector) and observe that $f$ is a nonnegative orthonormal representation of $G$. Letting $M$ be the Gram matrix of $\{f(v) : v \in V(G)\}$ and defining $y(v) = ||g(v)||$ for all $v \in V(G)$, we have that $y$ is a unit vector and thus,
\[
\lambda_1(M) \geq y^\intercal M y = \sum_{u,v \in V(G)}{y(u)y(v) \inprod{f(u)}{f(v)}} = \sum_{u,v \in V(G)}{M'(u,v)} = \chivec(G).
\]
The result now follows from \eqref{eq_duality}.
\end{proof}

In the following, given matrices $X,Y \in \R^{I \times I}$, we let $\inprod{X}{Y}_F = \sum_{i,j \in I}{X(i,j)Y(i,j)}$ denote their Frobenius inner product. We also let $\R^{r \times r}$ denote the space of $r \times r$ real matrices.

\begin{proof}[Proof of \Cref{thm_vec_lovasz}]
Let $f : V(G) \rightarrow \R^r$ be an orthonormal representation of $G$ whose Gram matrix $M$ satisfies $\lambda_1 = \lambda_1(M) = \vartheta\left(\overline{G}\right)$ and let $x : V(G) \rightarrow \R$ be a corresponding unit eigenvector. We assign to each vertex $v \in V(G)$, the matrix $g(v) = f(v) {f(v)}^\intercal$ and observe that $\inprod{g(u)}{g(v)}_F = \inprod{f(u)}{f(v)}^2$ for all $u, v \in V(G)$, so that $g : V(G) \rightarrow \R^{r \times r}$ is a nonnegative orthonormal representation of $G$ when $\R^{r \times r}$ is equipped with the Frobenius inner product. Now let $L$ be the matrix whose columns are the vectors $\{f(v): v \in V(G)\}$ and define $B = \frac{1}{\lambda_1} L x (Lx)^\intercal$. Then we have 
$\inprod{B}{B}_F = \frac{\inprod{Lx}{Lx}^2}{\lambda_1^2} = \frac{(x^\intercal M x)^2}{\lambda_1^2} = 1$
and
\[ 
\inprod{g(v)}{B}_F = \frac{\inprod{f(v)}{Lx}^2}{\lambda_1 }
= \frac{\inprod{L e_v}{Lx}^2}{\lambda_1 } 
= \frac{\left( e_v^\intercal M x \right)^2}{\lambda_1 }
= \lambda_1 x(v)^2
\]
for all $v \in V(G)$. Thus we may apply \Cref{lem_lower} and Cauchy-Schwartz to obtain the desired bound
\[
\chivec(G) \geq \sum_{v \in V(G)}{\inprod{g(v)}{B}_F^2}
= \lambda_1^2 \sum_{v \in V(G)}{x(v)^4}
\geq \frac{\lambda_1^2}{n} \left( \sum_{v \in V(G)}{x(v)^2} \right)^2
= \frac{\lambda_1^2}{n}. \qedhere
\]
\end{proof}

\begin{proof}[Proof of \Cref{thm_ng}]
Let $r = \msr(G)$ and let $f : V(G) \rightarrow \R^r$ be an orthonormal representation of $G$. As in the proof of \Cref{thm_vec_lovasz}, we assign the matrix $g(v) = f(v) f(v)^\intercal$ to each vertex $v \in V(G)$ and observe that $g : V(G) \rightarrow \R^{r \times r}$ is a nonnegative orthonormal representation of $G$. Thus, if we let $B = \frac{1}{\sqrt{r}} I$ where $I \in \R^{V(G) \times V(G)}$ is the identity matrix, then $\inprod{B}{B}_F = \tr(I)/r = 1$ and so, \Cref{lem_lower} yields the desired bound 
\[
\chivec(G) \geq \sum_{v \in V(G)}{\inprod{g(v)}{B}_F^2}
= \frac{1}{r} \sum_{v \in V(G)}{\tr\left(f(v) {f(v)}^\intercal \right)^2}
= \frac{n}{r}. \qedhere
\]
\end{proof}

It now remains to establish \Cref{thm_rank_plus} and \Cref{cor_extension}. We first demonstrate that \Cref{cor_extension} follows from \Cref{thm_rank_plus} via a result of Yannakakis \cite{Y91}, who showed that the extension complexity of a polytope equals the nonnegative rank of any slack matrix of this polytope. Indeed, Kwan, Sauermann, and Zhao observed that his result directly implies the following lemma.

\begin{lemma}[Lemma 2.4 in \cite{KSZ22}] \label{lem_yan}
For $n, r \in \N$, let $f_{\xc}(n, r)$ be the maximum extension complexity of a polytope with dimension at most $d$ and at most $n$ facets and let $f_{+}(n,r)$ be the maximum nonnegative rank of a nonnegative matrix with at most $n$ columns and rank at most $r$. Then for all $r \geq 2$, we have 
\[
f_{\xc}(n, r-1) = f_{+}(n,r).
\]
\end{lemma}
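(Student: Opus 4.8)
The plan is to derive this from Yannakakis's theorem together with two elementary facts: the rank of any slack matrix of a polytope equals its dimension plus one, and that $\rk$ and $\rkplus$ are unchanged under deleting zero rows or columns and under multiplication on either side by a positive diagonal matrix, while in addition $\rkplus$ is transpose-invariant and does not increase when passing to a submatrix. I would also use the standard generalization of Yannakakis's theorem in which one fixes any finite system $\mathcal{I}$ of valid inequalities with $P = \{x : \inprod{a}{x} \le b,\ (a,b)\in\mathcal{I}\}$ and any finite set $\mathcal{V}$ of points with $\mathrm{conv}(\mathcal{V}) = P$: the nonnegative matrix $S$ with rows indexed by $\mathcal{I}$, columns by $\mathcal{V}$, and $S\big((a,b),v\big) = b - \inprod{a}{v}$ still satisfies $\rkplus(S) = \xc(P)$ (for the inequality $\le$, an extension of $P$ lifts every $(a,b)\in\mathcal{I}$ to a nonnegative combination of its facets, yielding a nonnegative factorization of $S$; for $\ge$, Yannakakis's construction applies verbatim). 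Finally I would note at the outset that polar duality sends a polytope with at most $n$ facets to one with at most $n$ vertices, preserving dimension, and preserves $\xc$ because the slack matrix of the dual is the transpose of that of $P$; hence $f_{\xc}(n,r)$ is unchanged if it is defined using ``at most $n$ vertices'', which I will do.

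For the inequality $f_{\xc}(n,r-1)\le f_+(n,r)$: if $P$ is a polytope with at most $n$ vertices and $\dim P \le r-1$, then its facet--vertex slack matrix $S_P$ is nonnegative, has at most $n$ columns, has rank $\dim P + 1 \le r$, and satisfies $\rkplus(S_P) = \xc(P)$; hence $f_+(n,r) \ge \xc(P)$, and taking the maximum over such $P$ gives the bound.

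For the reverse inequality $f_+(n,r)\le f_{\xc}(n,r-1)$: let $M$ be a nonnegative matrix with at most $n$ columns, $\rk M \le r$, and $\rkplus(M) = f_+(n,r)$. After deleting zero rows and columns (harmless for $\rk$ and $\rkplus$) we may assume $M$ has no zero column; then the row space of $M$ meets the open positive orthant --- otherwise there is a nonzero $y \ge 0$ with $My = 0$, and $\sum_j y_j M_{\cdot j} = 0$ forces a zero column --- so, rescaling the columns of $M$ by a positive diagonal matrix, we may further assume $\mathbf{1}^\intercal$ lies in the row space of $M$; this changes neither $\rk$ nor $\rkplus$. Now put $P := \mathrm{conv}\{\text{columns of } M\} \subseteq \R^{k}$, where $k$ is the number of rows of $M$. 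Then $P$ is a polytope with at most $n$ vertices, and its dimension is the affine dimension of the columns of $M$, which equals $\rk M - 1$ precisely because $\mathbf{1}^\intercal$ lies in the row space, so $\dim P \le r-1$. Since the columns of $M$ are nonnegative, each inequality $y_i \ge 0$ is valid on $P$, with the slack of the $j$th column at the $i$th inequality equal to $M_{ij}$; thus, choosing a finite system $\mathcal{I}$ of valid inequalities defining $P$ that contains all the $y_i \ge 0$, the matrix $M$ is a submatrix of the generalized slack matrix $S$ of $P$ with respect to $\mathcal{I}$ and the columns of $M$. By the generalized Yannakakis theorem and submatrix monotonicity, $\rkplus(M) \le \rkplus(S) = \xc(P) \le f_{\xc}(n,r-1)$.

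The step I expect to be the main obstacle is the middle part of the last paragraph --- turning an arbitrary nonnegative matrix into a polytope whose dimension is exactly one less than its rank. This is exactly what the reduction to ``no zero column'' followed by the positive rescaling of the columns achieves: without it, the convex hull of the columns could have dimension $\rk M$ rather than $\rk M - 1$ (for instance when $\mathbf{1}^\intercal$ is not already in the row space), which would overshoot the allowed dimension. The only other non-routine ingredient is the generalized form of Yannakakis's theorem, which is what lets the ``$M$ is a submatrix of a slack matrix'' argument go through directly; alternatively one could arrange for a positive diagonal rescaling of $M$ to be literally a facet--vertex slack matrix and appeal to Yannakakis's theorem in its original form.
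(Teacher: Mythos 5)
Your proof is correct. Note that the paper does not prove this lemma at all: it is imported verbatim as Lemma 2.4 of Kwan--Sauermann--Zhao, with only the remark that it follows from Yannakakis's theorem on slack matrices. Your argument is a correct, self-contained derivation along exactly those lines --- polar duality to pass between facet and vertex counts, $\mathrm{rk}(S_P)=\dim P+1$ for one direction, and for the other the normalization (via the Stiemke alternative and a positive diagonal column scaling) making $\mathbf{1}^\intercal$ lie in the row space so that $\mathrm{conv}(\text{columns})$ has dimension $\mathrm{rk}(M)-1$, followed by embedding $M$ into a generalized slack matrix --- which is essentially the proof given in \cite{KSZ22}.
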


\begin{proof}[Proof of \Cref{cor_extension}]
Let $M$ be the $n \times n$ nonnegative matrix given by \Cref{thm_rank_plus}, so that $r = \rk(M) \leq n^{C\sqrt{ \frac{\log{\log{n}}}{\log{n}} } }$ and $\rkplus(M) \geq n^{1 - C\sqrt{ \frac{\log{\log{n}}}{\log{n}} } }$. Using \Cref{lem_yan}, it follows that 
\[
f_{\xc}(n, r-1) = f_{+}(n,r) \geq n^{1 - C\sqrt{ \frac{\log{\log{n}}}{\log{n}} } },
\]
so there exists a polytope $P$ with at most $n$ facets and dimension at most $n^{C\sqrt{ \frac{\log{\log{n}}}{\log{n}} } } $ such that $\xc(P) \geq  n^{1 - C\sqrt{ \frac{\log{\log{n}}}{\log{n}} } }$. The result now follows by considering the polar dual of $P$.
\end{proof}

In order to prove \Cref{thm_rank_plus}, we need a way to lower bound the nonnegative rank of a matrix. It turns out that a combinatorial parameter known as the rectangle covering number provides such a bound. Intuitively, it is the minimum number of rectangles necessary to cover the support of the given matrix. 

\begin{definition}
Let $I$ and $J$ be finite sets. For any $I' \subseteq I$ and $J' \subseteq J$, we call $I' \times J'$ a \emph{rectangle}. 
For any matrix $M$ indexed by $I \times J$, we define the \emph{rectangle covering number} $\rc(M)$ to be the minimum $k$ such that there exist rectangles $R_1, \ldots, R_k$ satisfying $\bigcup_{i=1}^{r}{R_i} = \{ (i,j) \in I \times J : M(i,j) \neq 0 \}$). 
\end{definition}

\begin{lemma}[equation (2) from \cite{FKPT13}]\label{lem_rectangle_bound}
For any nonnegative matrix $M$,
\[
\rkplus(M) \geq \rc(M).
\]
\end{lemma}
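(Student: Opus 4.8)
The plan is to unwind the definition of the nonnegative rank and to exploit the fact that nonnegativity forbids cancellation. Set $r = \rkplus(M)$ and fix a decomposition $M = \sum_{i=1}^{r} W_i$ in which each $W_i$ is a nonnegative rank-one matrix, say $W_i = u_i v_i^\intercal$ for nonnegative vectors $u_i, v_i$; equivalently, starting from a nonnegative factorization $M = AB$ with $A$, $B$ nonnegative of inner dimension $r$, one takes $u_i$ to be the $i$-th column of $A$ and $v_i^\intercal$ to be the $i$-th row of $B$.

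First I would describe the support of each summand. Since $W_i(j,k) = (u_i)_j (v_i)_k$, we have $W_i(j,k) \neq 0$ precisely when $(u_i)_j \neq 0$ and $(v_i)_k \neq 0$, so the support of $W_i$ equals the rectangle $J_i \times K_i$, where $J_i = \{\, j : (u_i)_j \neq 0 \,\}$ and $K_i = \{\, k : (v_i)_k \neq 0 \,\}$.

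The key step is then the observation that for each index $(j,k)$ the value $M(j,k) = \sum_{i=1}^{r} W_i(j,k)$ is a sum of nonnegative reals, and hence vanishes if and only if each term $W_i(j,k)$ vanishes. Consequently the support of $M$ is exactly $\bigcup_{i=1}^{r} (J_i \times K_i)$, a union of $r$ rectangles, which yields $\rc(M) \le r = \rkplus(M)$. I do not expect any genuine obstacle here: the entire content lies in this no-cancellation observation together with the elementary fact that nonnegative rank-one matrices have rectangular support, and the only point requiring attention is to work with the rank-one form of the summands in the definition of $\rkplus$ (with that convention in place the argument is immediate).
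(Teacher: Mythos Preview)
Your argument is correct and is exactly the standard proof of this inequality. Note, however, that the paper does not supply its own proof of \Cref{lem_rectangle_bound}; it merely cites equation~(2) of \cite{FKPT13}. There is therefore nothing to compare against beyond observing that your proof is the one that underlies that citation. Your caveat about needing the summands $W_i$ to be rank one is well taken: as literally stated in the paper, the definition of $\rkplus$ omits the rank-one requirement (which would trivialize it to $1$), so one must read it with that convention, after which your no-cancellation observation and the rectangular support of nonnegative rank-one matrices finish the job.
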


\begin{proof}[Proof of \Cref{thm_rank_plus}]
First note that we may assume $n$ is sufficiently large, since otherwise the result holds trivially by taking $C$ sufficiently large. Let $t$ be the largest integer smaller than $n^{\sqrt{ \frac{\log{\log{n}}}{\log{n}} } }$. \Cref{thm_main} implies that there exists a graph $G$ with $n$ vertices and at least $n^{2 - C' \frac{\log{\log{t}}}{ \log{t} } }$ edges having a faithful orthonormal representation $f : V(G) \rightarrow \R^r$ such that $r \leq n^{C' \frac{\log{\log{t}}}{ \log{t} } }$, where $C' > 0$ is a constant. Now define $g : V(G) \rightarrow \R^{r \times r}$ by $g(v) = f(v) {f(v)}^\intercal$ and consider the corresponding Gram matrix $M \in \R^{V(G) \times V(G)}$ defined by $M(u,v) = \inprod{g(u)}{g(v)}_F = \inprod{f(u)}{f(v)}^2$ for $u,v \in V(G)$. Clearly $M$ is nonnegative and since the dimension of $\R^{r \times r}$ is $r^2$, we have $\rk(M) \leq r^2$. 

It remains to lower bound $\rkplus(M)$ and in view of \Cref{lem_rectangle_bound}, it will suffice to bound $\rc(M)$. To this end, observe that since $G$ is $K_{t,t}$-free, we have that any rectangle $I \times J \subseteq V(G) \times V(G)$ with $M(u,v) \neq 0$ for all $(u,v) \in I \times J$ satisfies $\min(| I |, | J |) < 2t$. Indeed, otherwise the subgraph spanned by $I \cup J$ would contain a copy of $K_{t,t}$. It follows that any rectangle contained in $\{(u,v) \in V(G) \times V(G) : M(u,v) \neq 0 \}$ has cardinality at most $2t n$. Moreover, since $f$ is faithful, $M$ has at least $2n^{2 - C' \frac{\log{\log{t}}}{ \log{t} } }$ nonzero entries and so, we conclude that
\[
\rc(M) \geq \frac{2 n^{2 - C' \frac{\log{\log{t}}}{ \log{t} } } }{ 2 t n} \geq n^{1 - (C' + 1)\sqrt{ \frac{\log{\log{n}}}{\log{n}} } },
\]
as desired.
\end{proof}


\begin{thebibliography}{99}

\bibitem{ABKS03}
  Alon, N., Bollob\'as, B., Krivelevich, M., \& Sudakov, B. 
  (2003). 
  Maximum cuts and judicious partitions in graphs without short cycles. 
  \textit{Journal of Combinatorial Theory, Series B, 88} (2), 
  329--346.

\bibitem{AS99}
  Alon, N., \& Szegedy, M.
  (1999).
  Large sets of nearly orthogonal vectors.
  \textit{Graphs and Combinatorics, 15}, 
  1--4.  
  
\bibitem{BJS23}
  Balla, I., Janzer, O., \& Sudakov, B. 
  (2023).
  On MaxCut and the Lov\'asz theta function
  \emph{Proceedings of the American Mathematical Society}.
  
\bibitem{BLS20}
  Balla, I., Letzter, S., \& Sudakov, B. 
  (2020). 
  Orthonormal representations of H-free graphs. 
  \emph{Discrete \& Computational Geometry, 64}, 
  654--670.
  
\bibitem{BS92}
  Berman, P., \& Schnitger, G. 
  (1992). 
  On the complexity of approximating the independent set problem. 
  \textit{Information and Computation, 96} (1), 
  77--94.

\bibitem{FR87}
  Frankl, P., \& R\"{o}dl, V.
  (1987).
  Forbidden intersections. 
  \textit{Trans. AMS, 300}, 
  259--286.  
 
\bibitem{F95}
  Feige, U. 
  (1995, May). 
  Randomized graph products, chromatic numbers, and the Lov\'asz $\vartheta$-function. 
  In \textit{Proceedings of the twenty-seventh annual ACM symposium on Theory of computing} 
  (pp. 635--640).
  
\bibitem{FKPT13}
  Fiorini, S., Kaibel, V., Pashkovich, K., \& Theis, D. O. 
  (2013). 
  Combinatorial bounds on nonnegative rank and extended formulations. 
  \textit{Discrete mathematics, 313} (1), 
  67--83.
  
\bibitem{FS92}
  F\"uredi, Z., \& Stanley, R. 
  (1992). 
  Sets of Vectors with Many Orthogonal Pairs. 
  \textit{Graphs and Combinatorics, 8}, 
  391--394.  
  
\bibitem{GJS23}
  Glock, S., Janzer, O., \& Sudakov, B. 
  (2023). 
  New results for MaxCut in $H$-free graphs. 
  \textit{Journal of the London Mathematical Society.}
  
\bibitem{KMS98}
  Karger, D., Motwani, R., \& Sudan, M. 
  (1998). 
  Approximate graph coloring by semidefinite programming. \emph{Journal of the ACM (JACM), 45} (2), 
  246--265.
  
\bibitem{K94}
  Knuth, D. E. 
  (1994). 
  The Sandwich Theorem. 
  \textit{The Electronic Journal of Combinatorics, 1}, 
  1--48.  

\bibitem{KSZ22}
  Kwan, M., Sauermann, L., \& Zhao, Y. 
  (2022). 
  Extension complexity of low-dimensional polytopes. 
  \textit{Transactions of the American Mathematical Society, 375} (6), 
  4209--4250.

\bibitem{L79}
  Lov\'{a}sz, L.
  (1979).
  On the Shannon capacity of a graph.
  \textit{IEEE Transactions on Information Theory IT-25}, 
  1--7.
  
\bibitem{H12}
  Hrube\v{s}, P. 
  (2012). 
  On the nonnegative rank of distance matrices. 
  \textit{Information Processing Letters, 112} (11), 
  457--461.

\bibitem{S79}
  Schrijver, A.
  (1979). 
  A comparison of the Delsarte and Lov\'{a}sz bounds. 
  \emph{IEEE Transactions on Information Theory, 25} (4), 425--429.
  
\bibitem{S99}
  Sgall, J. 
  (1999). 
  Bounds on pairs of families with restricted intersections. 
  \textit{Combinatorica, 19} (4), 
  555--566.
  
\bibitem{Y91}
  Yannakakis, M. 
  (1991). 
  Expressing combinatorial optimization problems by Linear Programs. 
  \textit{Journal of Computer and System Sciences, 43} (3), 
  441--466.
  
\end{thebibliography}
\end{document}